\theoremstyle{plain}
\newtheorem{theorem}{Theorem}[section]
\newtheorem{lemma}[theorem]{Lemma}
\newtheorem{proposition}[theorem]{Proposition}
\theoremstyle{definition}
\newtheorem{example}[theorem]{Example}
\newcommand{\ba}{\begin{array}{ll}}
\newcommand{\bal}{\begin{array}{ll}}
\newcommand{\ea}{\end{array}}
\numberwithin{equation}{section}
\title[]{Pervasiveness of  $\mathcal{L}^r(E,F)$ in  $\mathcal{L}^r(E,F^{\delta})$}
\author[Q.~Kiervin~Starkey]{Quinn Kiervin Starkey}
\address{Department of Mathematics, Toronto Metropolitan University, 350 Victoria Street, Toronto, Canada M5B 2K3}
\email{quinn.kiervin@torontomu.ca}
\author[F.~Xanthos]{Foivos Xanthos}
\address{Department of Mathematics, Toronto Metropolitan University, 350 Victoria Street, Toronto, Canada M5B 2K3}
\email{foivos@torontomu.ca}
\date{\today}
\begin{document}

\begin{abstract}
Let $E, F$ be Archimedean Riesz spaces, and let $F^{\delta}$ denote an order completion of $F$. In this note, we provide necessary conditions under which the space of regular operators $\mathcal{L}^r(E, F)$ is pervasive in $\mathcal{L}^r(E, F^{\delta})$. Pervasiveness of $\mathcal{L}^r(E, F)$ in $\mathcal{L}^r(E, F^{\delta})$ implies that the Riesz completion of \( \mathcal{L}^r(E, F) \) can be realized as a Riesz subspace of \( \mathcal{L}^r(E, F^{\delta}) \). It also ensures that  the regular part of the space of order continuous operators $\mathcal{L}^{oc}(E, F)$ forms a band of $\mathcal{L}^r(E, F)$. Furthermore,  the positive part $T^+$ of any operator \( T \in \mathcal{L}^r(E, F) \), provided it exists, is given by the Riesz-Kantorovich formula. The results apply in particular to cases where $E = \ell_0^{\infty}$, $E = c$, or $F$ is atomic, and they provide solutions to some problems posed in  \cite{AW:91} and \cite{W:24}.
\end{abstract}

\keywords{Regular operators, Riesz completion, Riesz-Kantorovich formula, Order continuous operators}

\subjclass[2010]{46A40,46B42,47B65}

\maketitle

\section{Introduction}
The space of regular operators $\mathcal{L}^r(E, F)$ between Archimedean Riesz spaces $E$ and $F$ has been extensively studied when the range space $F$ is order complete. A wealth of results is available concerning the order structure of $\mathcal{L}^r(E, F)$ in this context. In Theorem~\ref{mainthm}, we show that a couple of key results, namely a version of Ogasawara's theorem and the Riesz-Kantorovich formula continue to hold when the order completeness assumption on $F$ is replaced by the pervasiveness of $\mathcal{L}^r(E, F)$ in $\mathcal{L}^r(E, F^\delta)$. In Proposition~\ref{mainprop1} and Proposition~\ref{mainprop2}, we provide sufficient conditions for the pervasiveness of $\mathcal{L}^r(E, F)$ in $\mathcal{L}^r(E, F^\delta)$. 

The classical Ogasawara theorem states that the space of order continuous operators $\mathcal{L}^{oc}(E, F)$ is a band of $\mathcal{L}^r(E, F)$ whenever $F$ is order complete. It remains an open problem whether this holds when the order completeness assumption on $F$ is relaxed (see \cite{GK:18}, p. 277). In \cite{GK:08, W:24}, the authors explored the case where $E$ is the space $\ell_0^\infty$ of eventually constant real sequences. In \cite{GK:08}, it was shown that $\mathcal{L}^{oc}(\ell_0^\infty, \ell_0^\infty)$ is a band of $\mathcal{L}^r(\ell_0^\infty, \ell_0^\infty)$, and in \cite{W:24}, it was proved that $\mathcal{L}^{oc}(\ell_0^\infty, F)$ is a band of $\mathcal{L}^r(\ell_0^\infty, F)$ if $F$ is an almost Dedekind $\sigma$-complete Riesz space. In Theorem~\ref{maincor2}, we complete the picture by proving that this result holds for any Archimedean Riesz space $F$. We also show in Example~\ref{exmp2} that $\mathcal{L}^{oc}(\ell_0^\infty, F)$ is not necessarily directed. These results resolve  \cite[Problem~4.15]{W:24}. More generally, in Theorem~\ref{mainthm}, we show that pervasiveness of $\mathcal{L}^r(E, F)$ in $\mathcal{L}^r(E, F^\delta)$ implies that $\mathcal{L}^{oc}(E, F) \cap \mathcal{L}^r(E, F)$ is a band of $\mathcal{L}^r(E, F)$. In Example~\ref{exmp1}, we present an example where $\mathcal{L}^{oc}(E, F)$ is not a band of $\mathcal{L}^r(E, F)$, demonstrating that a generalization of Ogasawara's theorem without restricting to the regular part \( \mathcal{L}^{oc}(E, F) \cap \mathcal{L}^r(E, F) \) of  $\mathcal{L}^{oc}(E, F)$ is not possible in general.

In \cite{E:19}, a long-standing problem in the theory of regular operators was resolved by constructing a regular operator whose positive part is not given by the Riesz-Kantorovich formula. In contrast, we demonstrate in Theorem~\ref{mainthm} that pervasiveness of $\mathcal{L}^r(E, F)$ in $\mathcal{L}^r(E, F^\delta)$ guarantees that $\mathcal{L}^r(E,F)$ possesses the Riesz-Kantorovich property. Specifically for each $T \in \mathcal{L}^r(E,F)$ for which $T^+$ exists in $\mathcal{L}^r(E,F)$ we have $T^+(x)=\sup T[0,x]$ for each $x \in E_+$. As an application, Theorem~\ref{maincor1} shows that $\mathcal{L}^r(E, \ell^\infty_0)$ satisfies the Riesz-Kantorovich property, thereby answering a question posed in \cite[p. 262]{AW:91}.

\section{Notations}
Below, we summarize some notations and results that are necessary for the exposition of our findings. The reader is referred to the textbooks \cite{AB:03,AB:06,AL:06,GK:18,SS:74}. for all undefined terms.

\subsubsection*{Pre-Riesz spaces} Let $(X, \geq)$ be a partially ordered vector space, and let $Y$ be  a subspace of $X$. We say that $X$ is a \emph{Riesz space} (or a vector lattice) if for every $x,y \in X$ the set $\{x,y\}$ has a least upper bound (supremum) and a greatest lower bound (infimum). The space $X$ is \emph{Archimedean} if for every $x,y \in X$ with $n x  \leq y$ for all $n \in \mathbb{N}$ one has  $x \leq 0$. We say that $X$ is \emph{directed} (or positively generated) if  for every $x \in X$, there exists $\widetilde{x} \in X$ such that $\widetilde{x} \geq x,0$. The subspace $Y$ is \emph{pervasive} in $X$ if for every $x \in X$ with $x>0$ there exists $y \in Y$ such that $0<y\leq x$. Furthermore, $Y$ is \emph{order dense} in $X$ if $x=\sup\{y \in Y \mid y\leq x\}=\inf\{y \in Y \mid y \geq x\}$ for all $x \in X$. Finally, $Y$ is said to be \emph{majorizing} in $X$ if for every $x \in X$ there is $y \in Y$ such that $y \geq x$.  If $X$ is a Riesz space, then $Y$ is a \emph{Riesz subspace} of $X$ if $x \vee y \in Y$, for all $x,y \in Y$. The \emph{Riesz subspace of $X$ generated by $Y$} is the smallest Riesz subspace that contains $Y$ and is equal to the set of differences of finite suprema of elements of $Y$. Let $A$ be a subset of $X$, we denote by $[A]$ the linear span of $A$, we denote by $A^{u}_X$ (or simply $A^u$) the set of upper bounds of $A$ in $X$, that is $A^u=\{x \in X \mid x \geq a \,\,\text{ for all }  a \in A\}$. The elements $x,y \in X$ are called \emph{disjoint in $X$}, denoted by $x \perp_X y$ (or simply $x \perp  y$), if $\{x+y,-x-y\}^u=\{x-y,-x+y\}^u$. If $X$ is a Riesz space then $x \perp y$ if and only if $|x| \wedge |y|=0$. The \emph{disjoint complement} $Y^d_X$ (or simply $Y^d$) of $Y$ in $X$ is the set $Y^d=\{x \in X \mid x \perp y \,\, \text{ for all } \,\, y \in Y\}$. We say that $Y$ is a \emph{band} of $X$ if $Y^{dd}=Y$ (or equivalently $Y^{dd} \subseteq Y$). The band $Y$ of $X$ is said a \emph{projection band} of $X$ if we have $X=Y \oplus Y^d$.  That is, for every $x \in X$ there exist unique $y \in Y,z \in Y^d$ such that $x=y+z$. The associated projection $P: X \rightarrow Y$ defined by the formula $P(x)=y$ is called the \emph{band projection associated with $Y$}. We say that $X$ is a \emph{pre-Riesz space} if $\{x+z,y+z\}^u \subseteq \{x,y\}^u$ implies $z \in X_+$. By \cite[Proposition 2.2.3]{GK:18}, every pre-Riesz space is directed, and every directed Archimedean  partially ordered vector space  is a pre-Riesz space. Let $X,\widetilde{X}$ be pre-Riesz spaces. We say that $X,\widetilde{X}$ are \emph{order isomorphic} whenever there exists an \emph{order isomorphism} $j: X\rightarrow \widetilde{X}$, i.e.,  a linear bijection $j: X \rightarrow \widetilde{X}$ that is bipositive (i.e., $x \geq 0$ if and only if $j(x) \geq 0$ for all $x \in X$). If moreover $X,\widetilde{X}$ are Riesz spaces, then we say that $j$ is a \emph{Riesz isomorphism}. We say that $X^{\rho}$ is a Riesz completion of $X$ whenever $X^{\rho}$ is a Riesz space and there exists a bipositive linear map $i:X \rightarrow X^{\rho}$ such that $i(X)$ is order dense in $X^{\rho}$ and the Riesz subspace of $X^{\rho}$ generated by $i(X)$ is equal to $X^{\rho}$. Riesz completions are unique up to Riesz isomorphisms (see \cite[Proposition~2.4.4]{GK:18}).

\subsubsection*{Riesz spaces}

For the following $X$ will denote an Archimedean Riesz space. We say that $X$ is \emph{order complete} if every nonempty subset of $X$ that is bounded from above has a supremum. The order complete Riesz space $X^{\delta}$ is an \emph{order completion}  of $X$ if there exists a linear bipositive map $i: X \rightarrow X^{\delta}$ such that $i(X)$ is both majorizing and pervasive in $X^{\delta}$, order completions are unique up to Riesz isomorphisms. We note that $X^{\delta}$ is also Archimedean (see, for example,~\cite[Proposition~2.4.12]{GK:18}). We say that a net $(x_\alpha)_{\alpha \in A}$ in $X$ \emph{converges  uniformly} or relatively uniformly to $x \in X$ if there exists $v \in X_+$ such that for all $\epsilon>0$ there exists $\alpha_0 \in A$ such that $|x_\alpha-x| \leq \epsilon v$ for all $\alpha \geq \alpha_0$. In this case, we write $x_{\alpha} \xrightarrow{{u}} x$. We say that a sequence $(x_{n})$ in $X$ is \emph{uniformly Cauchy} if there exists $v \in X_+$ such that for all $\epsilon>0$ there exists $n_0$ such that $|x_{n}-x_{m}|\leq \epsilon v$ for all $n,m \geq n_0$.  We say that $X$ is \emph{uniformly complete} if every uniformly Cauchy sequence in $X$ converges uniformly in $X$. For any set $K \subseteq X$ we denote the sets of uniformly limits of all sequences in $K$ by $\overline{K}^{u}=\{x \in X \mid x_{n}\xrightarrow{{u}} x \,\, \text{ for some } (x_{n}) \subseteq K\}$ and call it the \emph{uniformly closure} of $K$ in $X$. A net $(x_{\alpha})_{\alpha \in A}$ in $X$ is said to \emph{converge in order} to $x \in X$, written $x_{\alpha} \xrightarrow{{o}} x$ if there exists another net $(b_{\gamma})_{\gamma \in \Lambda}$ in $X$ such that $b_{\gamma} \downarrow 0$ and for any $\gamma \in \Lambda$ there exists $\alpha_0 \in A$ such that $|x_{\alpha}-x|\leq b_{\gamma}$ for all $\alpha \geq \alpha_0$. Order convergence forms a Hausdorff convergence structure. The order convergence structure is linear and the lattice operations are order continuous, moreover uniform convergence is the Mackey modification of the order convergence structure  (see \cite{BTW23} for more details). We will also make use of a variant of order convergence. For a net  $(x_{\alpha})_{\alpha \in A}$ in $X$ and $x \in X$ we write $x_{\alpha} \xrightarrow{{o_1}} x$ if there exists a net $(y_{\alpha})_{\alpha \in A}$ in $X$ such that $y_{\alpha} \downarrow 0$ and $|x_{\alpha}-x| \leq y_{\alpha}$ for all $\alpha \in A$. We caution the reader that the notation we use here for $o$-convergence and  $o_1$-convergence is not in agreement with \cite{GK:18}. If $X$ is order complete and $(x_{\alpha})$ is an order bounded net in $X$ then $x_{\alpha} \xrightarrow{{o_1}} 0$ if and only if  $x_{\alpha} \xrightarrow{{o}} 0$ (see, for example,~\cite[Proposition~3.7.3]{GK:18}), however in general $o$-convergence does not agree with $o_1$-convergence. We say that $\{e_{i} \,\, | \,\, i \in I\}$ is a \emph{complete disjoint system} in $X$ if $e_{i}>0$ for each $i \in I$, $e_i \wedge e_j=0$ for each $i \neq j$, and if $x \wedge e_i=0$ for all $i \in I$, then $x=0$. An element $e$ in $X_+$ is said to be an atom if $0 \leq x \leq e$ implies that $x$ is a scalar multiple of $e$, in this case the subspace $[e]$ is a projection band of $X$ and we denote by $P_e$ the band projection associated with $[e]$.  We say that $X$ is \emph{atomic} if $X$ has a complete disjoint system consisting of atoms. Let $X$ be an atomic space and  $\{e_{i} \,\, | \,\, i \in I\}$  be a complete disjoint system of atoms then $x=\bigvee_{i \in I} P_{e_i}(x)=\bigvee_{i \in I} \lambda_{e_i}(x) e_i$, for every $x \in X_+$, where $\lambda_{e_i}: X \rightarrow \mathbb{R}$ is the \emph{coordinate functional of $e_i$} (see, for example,~\cite[Theorem~1.75, Theorem~1.77]{AB:03}). We write $\mathrm{Fin}(I)$ to denote the collection of all finite subsets of the index set $I$ and we view $\mathrm{Fin}(I)$ as a directed set ordered by  inclusion.

\subsubsection*{Space of  operators}

Let $E,F$ be Archimedean Riesz spaces, we denote by $\mathcal{L}^r(E,F)$  the space of regular operators between $E$ and $F$. Note that $\mathcal{L}^r(E,F)$ equipped with the pointwise partial order is a directed  Archimedean  partially ordered vector space (see, for example \cite[Proposition 1.2.4]{GK:18}), and thus $\mathcal{L}^r(E,F)$ is a pre-Riesz space. The space of order-bounded operators between $E$ and $F$ is denoted by $\mathcal{L}^b(E,F)$. By \cite[Theorem~5.1]{TT:20} we have that a linear operator $T:E \rightarrow F$ is order bounded if and only if $T$ preserves uniform convergence, that is $x_{\alpha} \xrightarrow{{u}} 0$ implies $T(x_{\alpha}) \xrightarrow{{u}} 0$ for every net $(x_{\alpha})$ in $E$. We denote the space  $\mathcal{L}^b(E,\mathbb{R})$ by $E^\sim$. If $F$ is an order-complete space then $\mathcal{L}^r(E,F)=\mathcal{L}^b(E,F)$ is an order-complete Riesz space and the lattice operations are given by the Riesz-Kantorovich formula (see, for example \cite[Theorem 1.67]{AB:03}).  We say that a linear operator $T:E \rightarrow F$ is order continuous  if for any net $(x_{\alpha})_{\alpha \in A}$ in $E$ such that $x_{\alpha} \xrightarrow{{o}} 0$ we have $T(x_{\alpha}) \xrightarrow{{o}} 0$. We denote the  space of order-continuous operators between $E$ and $F$ by $\mathcal{L}^{oc}(E,F)$. In \cite{AB:03,AB:06,AL:06,GK:18}\footnote{The definition of the space of order-continuous operators in \cite{W:24} is consistent with our paper.} the authors used the term ``order continuity" and the notation $\mathcal{L}^{oc}(E,F)$ (or $\mathcal{L}_n(E,F)$) to denote the space of linear operators that are $o_1$-continuous (i.e., preserve $o_1$-convergence). In general, $o_1$-continuity is not equivalent to order continuity. However, both continuities imply that the operator $T$ is order bounded (see \cite[Theorem~2.1]{AS:05},  \cite[Lemma~1.54]{AL:06}).  If $T$ is positive, then $o_1$-continuity is equivalent to order continuity, and both continuities are equivalent to the implication $x_{\alpha} \downarrow 0 \Rightarrow T(x_{\alpha}) \downarrow 0$, for any net $(x_{\alpha})$ in $E$ (see  \cite[Theorem 4.4]{{HK:21}}). If $F$ is order complete, then $\mathcal{L}^{oc}(E,F)$ coincides with the space of linear operators that are $o_1$-continuous  (see  \cite[Theorem 9.6]{{HK:21}}). In light of the above, the classical  Ogasawara theorem, stated for $o_1$-continuous operators (see, for example \cite[Theorem 1.73]{AB:03}) implies that $\mathcal{L}^{oc}(E,F)$ is a projection band of $\mathcal{L}^r(E,F)$ whenever $F$ is order complete.

\section{The main results}
In what follows, unless otherwise stated, $E$ and $F$ denote Archimedean Riesz spaces, $F^{\delta}$ an order completion of $F$, and $i: F \to F^{\delta}$ the corresponding bipositive map. We will say that \textbf{$\mathcal{L}^r(E,F)$ is pervasive in $\mathcal{L}^r(E,F^{\delta})$} if $\mathcal{L}^r(E,i(F))$ is pervasive in $\mathcal{L}^r(E,F^{\delta})$, we also say that $\mathcal{L}^r(E,F)$ has the \textbf{Riesz-Kantorovich property} whenever for each $T \in \mathcal{L}^r(E,F)$ such that $T^+$ exists in $\mathcal{L}^r(E,F)$ we have  $T^+(x)=\sup\{T(x) \,\, | \,\, y \in [0,x]\}$ for each $x \in E_+$.

We begin with the following lemma, which enables us to work with the space $\mathcal{L}^r(E,i(F))$. The proof is straightforward and relies on standard results: (i) order isomorphisms between pre-Riesz spaces preserve arbitrary suprema, and (ii) they are disjointness-preserving operators (see, for example,~\cite[Proposition 5.2.4, Theorem 2.3.19, Theorem 5.1.12]{GK:18}). Additionally, Riesz isomorphisms are order continuous (see, for example,~\cite[Theorem 2.21]{AB:03}).

\begin{lemma}\label{lemma00000}
Let $\widetilde{F}$ be a Riesz space that is order isomorphic to $F$. Then the spaces $\mathcal{L}^r(E,F)$ and  $\mathcal{L}^r(E,\widetilde{F})$ are order isomorphic, moreover we have
\begin{enumerate}
\item[(i)] $\mathcal{L}^r(E,F)$ has the  Riesz-Kantorovich property if and only if $\mathcal{L}^r(E,\widetilde{F})$ has the  Riesz-Kantorovich property.
\item[(ii)]  $\mathcal{L}^{oc}(E,F)\cap \mathcal{L}^r(E,F)$ is a band of $\mathcal{L}^r(E,F)$ if and only if $\mathcal{L}^{oc}(E,\widetilde{F})\cap \mathcal{L}^r(E,\widetilde{F})$ is a band of $\mathcal{L}^r(E,\widetilde{F})$.
\end{enumerate}
\end{lemma}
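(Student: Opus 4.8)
The plan is to fix a Riesz isomorphism $j \colon F \to \widetilde{F}$ and define $\Phi \colon \mathcal{L}^r(E,F) \to \mathcal{L}^r(E,\widetilde{F})$ by $\Phi(T) = j \circ T$. First I would check that $\Phi$ is a well-defined order isomorphism: it is clearly linear; it is well-defined because $j$ is positive and hence carries positive operators to positive operators, and thus regular operators to regular operators; it is a bijection with inverse $S \mapsto j^{-1} \circ S$, since $j^{-1}$ is again a Riesz isomorphism; and it is bipositive because $T \geq 0$ if and only if $T(x) \in F_+$ for all $x \in E_+$, which by the bipositivity of $j$ is equivalent to $j(T(x)) \in \widetilde{F}_+$ for all $x \in E_+$, i.e.\ to $\Phi(T) \geq 0$. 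This establishes the first assertion.

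For part (i), suppose $\mathcal{L}^r(E,F)$ has the Riesz-Kantorovich property and let $S \in \mathcal{L}^r(E,\widetilde{F})$ be such that $S^+$ exists. Writing $T = \Phi^{-1}(S)$ and using that the order isomorphism $\Phi^{-1}$ preserves arbitrary suprema, I would deduce that $T^+ = \Phi^{-1}(S \vee 0) = \Phi^{-1}(S^+)$ exists in $\mathcal{L}^r(E,F)$, so that $\Phi(T^+) = S^+$. Applying the Riesz-Kantorovich property in $\mathcal{L}^r(E,F)$ gives $T^+(x) = \sup T[0,x]$ in $F$ for each $x \in E_+$. Since $j$ preserves this supremum, I obtain $S^+(x) = j(T^+(x)) = j(\sup T[0,x]) = \sup\{\, j(T(y)) \mid y \in [0,x]\,\} = \sup S[0,x]$, which is the Riesz-Kantorovich property for $\mathcal{L}^r(E,\widetilde{F})$. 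The reverse implication follows by the symmetric argument with $j^{-1}$ in place of $j$.

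For part (ii), I would first observe that $\Phi$ carries $\mathcal{L}^{oc}(E,F)$ onto $\mathcal{L}^{oc}(E,\widetilde{F})$: if $x_{\alpha} \xrightarrow{{o}} 0$ in $E$ and $T$ is order continuous, then $T(x_{\alpha}) \xrightarrow{{o}} 0$ in $F$, and since the Riesz isomorphism $j$ is order continuous, $\Phi(T)(x_{\alpha}) = j(T(x_{\alpha})) \xrightarrow{{o}} 0$; the same argument with $j^{-1}$ handles $\Phi^{-1}$. Hence $\Phi$ restricts to a bijection of $Y := \mathcal{L}^{oc}(E,F) \cap \mathcal{L}^r(E,F)$ onto $\widetilde{Y} := \mathcal{L}^{oc}(E,\widetilde{F}) \cap \mathcal{L}^r(E,\widetilde{F})$. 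Because $\Phi$ and $\Phi^{-1}$ are both order isomorphisms, they are both disjointness preserving, so $T \perp U$ if and only if $\Phi(T) \perp \Phi(U)$; combined with the surjectivity of $\Phi$ this yields $\Phi(A^d) = \Phi(A)^d$ for every $A \subseteq \mathcal{L}^r(E,F)$, and iterating gives $\Phi(Y^{dd}) = \widetilde{Y}^{dd}$. Since $\Phi(Y) = \widetilde{Y}$, the equality $Y^{dd} = Y$ holds if and only if $\widetilde{Y}^{dd} = \widetilde{Y}$, which is exactly the claim.

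The only genuinely delicate point is the identity $\Phi(A^d) = \Phi(A)^d$: it relies on disjointness being an order-theoretic notion that is preserved in both directions by an order isomorphism, and it is precisely what allows the band condition to be transported verbatim. Everything else is routine bookkeeping with $\Phi$ together with the order continuity and supremum preservation of $j$.
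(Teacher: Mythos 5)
Your proposal is correct and follows essentially the same route as the paper: the same operator-composition isomorphism $T \mapsto j\circ T$, the same use of supremum preservation for the Riesz--Kantorovich transfer in (i), and the same disjointness-preservation identity (you write $\Phi(A^d)=\Phi(A)^d$, the paper writes $j^{-1}(\mathcal{A}^d)=j^{-1}(\mathcal{A})^d$) together with order continuity of the isomorphism to identify the order-continuous parts in (ii). The only cosmetic difference is that you prove the implications in the opposite direction and invoke symmetry, exactly as the paper implicitly does.
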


\begin{proof}
Let $l:F \rightarrow \widetilde{F}$ be an order isomorphism. We define the operator $j:\mathcal{L}^r(E,F) \rightarrow \mathcal{L}^r(E,\widetilde{F})$ with $j(T)(x)=l(T(x))$ for all $x \in E$ and $T \in \mathcal{L}^r(E,F)$. Then it is easy to see that $j$ is linear, surjective  and bipositive, thus  $j$ is an order isomorphism and the spaces $\mathcal{L}^r(E,F)$ and  $\mathcal{L}^r(E,\widetilde{F})$ are order isomorphic. 

(i) Suppose that $\mathcal{L}^r(E,\widetilde{F})$ has the  Riesz-Kantorovich property, and let $T \in \mathcal{L}^r(E,F)$ such that $T^+$ exists. Then since $j$ preserves arbitrary suprema we have that $j(T)^+$ exists and is equal to $j(T^+)$. In particular we get $j(T)^+(x)=j(T^+)(x)=l(T^+(x))$ for each $x \in E$. Thus by using the Riesz-Kantorovich formula for $j(T)^+$ and the fact that both $l$ and $l^{-1}$ preserve arbitrary suprema we get 
\begin{align*}T^+(x)&=l^{-1}(j(T)^+(x))=l^{-1}\big(\sup\{j(T)(y) \mid y \in [0,x]\}\big) \\
&=l^{-1}(\sup\{l(T(y)) \mid y \in [0,x]\})=\sup\{T(y) \mid y \in [0,x]\},
\end{align*}
for all $x \in E_+$. Thus $\mathcal{L}^r(E,F)$ has the  Riesz-Kantorovich property.

(ii) Let $\mathcal{A}$ be a nonempty set of $\mathcal{L}^r(E,\widetilde{F})$, we claim that 
\begin{equation}\label{eqband}j^{-1}(\mathcal{A}^d)=j^{-1}(\mathcal{A})^d.\end{equation}

Indeed, let $T \in j^{-1}(\mathcal{A}^d)$, then $j(T) \in \mathcal{A}^d$. Now, let $S \in j^{-1}(\mathcal{A})$, then $j(S) \in \mathcal{A}$ and thus $j(S) \perp j(T)$. Since $j^{-1}$ is a disjointness preserving operator we get $S  \perp  T$ and thus $T \in j^{-1}(\mathcal{A})^d$. In particular, we have  $ j^{-1}(\mathcal{A}^d) \subseteq  j^{-1}(\mathcal{A})^d$. Conversely, suppose that $T \in  j^{-1}(\mathcal{A})^d$ then $T \perp j^{-1}(S)$ for all $S \in \mathcal{A}$. Since $j$ is  a disjointness preserving operator we get $j(T) \perp S$ for all $S \in \mathcal{A}$. In particular, $j(T) \in \mathcal{A}^d \Rightarrow T \in j^{-1}(\mathcal{A}^d)$, thus $j^{-1}(\mathcal{A})^d \subseteq j^{-1}(\mathcal{A}^d)$ and ~(\ref{eqband}) is established. Now, suppose that $\mathcal{B}=\mathcal{L}^{oc}(E,\widetilde{F})\cap \mathcal{L}^r(E,\widetilde{F})$ is a band of $\mathcal{L}^r(E,\widetilde{F})$. Then we have $\mathcal{B}^{dd}=\mathcal{B}$. Thus by applying repeatedly ~(\ref{eqband}) we get

$$j^{-1}(\mathcal{B})^{dd}=j^{-1}(\mathcal{B}).$$

Therefore  $j^{-1}(\mathcal{B})$ is a band of $\mathcal{L}^r(E,F)$. Finally, we note $j^{-1}(\mathcal{B})=\mathcal{L}^{oc}(E,F)\cap \mathcal{L}^r(E,F)$. Indeed, since $l$ and $l^{-1}$ are order-continuous operators we have $T \in \mathcal{L}^{oc}(E,F)\cap \mathcal{L}^r(E,F)$ if and only if $jT \in \mathcal{B}$ for any $T \in \mathcal{L}^r(E,F)$
\end{proof}

The following theorem is the main result of our paper. It demonstrates that the pervasiveness of $\mathcal{L}^r(E,F)$ in $\mathcal{L}^r(E,F^{\delta})$ enables the derivation of results typically obtained under the assumption that $F$ is order complete.

\begin{theorem}\label{mainthm}
If $\mathcal{L}^r(E,F)$ is pervasive in $\mathcal{L}^r(E,F^{\delta})$, then $\mathcal{L}^r(E,F)^\rho$ can be identified with the Riesz subspace of $\mathcal{L}^r(E,F^{\delta})$ generated by $\mathcal{L}^r(E,i(F))$. Moreover, $\mathcal{L}^r(E,F)$ has  the Riesz-Kantorovich property, and $\mathcal{L}^{oc}(E,F)\cap \mathcal{L}^r(E,F)$ is a band of $\mathcal{L}^r(E,F)$.
\end{theorem}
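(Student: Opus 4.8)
The plan is to transfer everything to the concrete inclusion $H:=\mathcal{L}^r(E,i(F))\subseteq G:=\mathcal{L}^r(E,F^{\delta})$. By Lemma~\ref{lemma00000} the order isomorphism induced by $i\colon F\to i(F)$ identifies $\mathcal{L}^r(E,F)$ with $H$ (and, since order isomorphisms of pre-Riesz spaces preserve Riesz completions, identifies $\mathcal{L}^r(E,F)^{\rho}$ with $H^{\rho}$) and transports the Riesz-Kantorovich and band properties; so it suffices to establish the three conclusions for $H$. The facts I will lean on are: $G$ is an order-complete Riesz space whose lattice operations are given by the Riesz-Kantorovich formula; $H$ is a directed Archimedean pre-Riesz subspace of $G$; and, by hypothesis, $H$ is pervasive in $G$, meaning that for every $U\in G$ with $U>0$ there is $S\in H$ with $0<S\le U$.

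Both the Riesz-completion statement and the Riesz-Kantorovich property follow from one pervasiveness device. For the former, let $R$ be the Riesz subspace of $G$ generated by $H$; since $R$ is generated by $H$, it suffices to prove that $H$ is order dense in $R$, for then $R$ realizes $H^{\rho}$ by uniqueness of Riesz completions (\cite[Proposition~2.4.4]{GK:18}), which together with the first paragraph identifies $\mathcal{L}^r(E,F)^{\rho}$ with $R$. Now $H$ is majorizing in $R$ — every $U\in R$ is a difference of finite suprema of elements of $H$, hence lies below some finite supremum $\bigvee_i a_i$ with $a_i\in H$, which lies below a single $h\in H$ because $H$ is directed — and therefore, being a subspace, $H$ is also minorizing in $R$. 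Given $U\in R$, the set $A=\{S\in H\mid S\le U\}$ is thus nonempty and bounded above by $U$, so $s:=\sup_G A$ exists in the order-complete space $G$ and $s\le U$. If $U-s>0$, pervasiveness gives $S_0\in H$ with $0<S_0\le U-s$, and then for every $S\in A$ we have $S+S_0\in H$ with $S+S_0\le s+(U-s)=U$, so $S+S_0\in A$, whence $S+S_0\le s$ and $S\le s-S_0$; taking the supremum yields $s\le s-S_0$, a contradiction. Thus $s=U$, and since $U\in R$ this is also the supremum in $R$; the infimum half follows by symmetry, so $H$ is order dense in $R$. The same device proves the Riesz-Kantorovich property: if $T\in H$ and $P:=\sup_H\{T,0\}$ exists, then the positive part $T^{+}_{G}=\sup_G\{T,0\}$ exists with $T^{+}_{G}\le P$; were $P-T^{+}_{G}>0$, pervasiveness would give $S_0\in H$ with $0<S_0\le P-T^{+}_{G}$, so $P-S_0\in H$ would dominate both $T$ and $0$, contradicting the minimality of $P$. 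Hence $P=T^{+}_{G}$, and the Riesz-Kantorovich formula valid in the order-complete $G$ gives $T^{+}(x)=\sup\{T(y)\mid y\in[0,x]\}$ for all $x\in E_+$.

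For the band statement put $\mathcal{B}:=\mathcal{L}^{oc}(E,i(F))\cap H$ and let $\mathcal{G}_{oc}:=\mathcal{L}^{oc}(E,F^{\delta})$, which is a band of $G$ by Ogasawara's theorem for the order-complete range $F^{\delta}$. I will prove $(\mathcal{B}^{d_{H}})^{d_{H}}=\mathcal{B}$ in three moves. First, $\mathcal{B}=\mathcal{G}_{oc}\cap H$: since $i(F)$ is order dense and pervasive in $F^{\delta}$, a net valued in $i(F)$ order converges in $i(F)$ if and only if it order converges in $F^{\delta}$ — a downward directed net in $i(F)$ has infimum $0$ in $i(F)$ precisely when it does in $F^{\delta}$, and any $F^{\delta}$-dominating net $b_{\gamma}\downarrow 0$ may be replaced by the downward directed net of its $i(F)$-majorants, which still decreases to $0$ — so for $T\in H$ order continuity into $i(F)$ is equivalent to order continuity into $F^{\delta}$. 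Second, disjointness in $H$ coincides with disjointness in $G$: $H$ is order dense in the Riesz space $R$, so disjointness is preserved, and $R$ is a Riesz subspace of $G$, so binary infima, and hence disjointness, agree. Third, pervasiveness lifts the band structure: for any band $\mathcal{C}$ of $G$ the intersection $\mathcal{C}\cap H$ is pervasive in $\mathcal{C}$ (an $S\in H$ with $0<S\le U\in\mathcal{C}$ lies in the ideal $\mathcal{C}$), and a pervasive subspace shares the disjoint complement of its host band. Applying the third move to $\mathcal{C}=\mathcal{G}_{oc}$ gives $\mathcal{B}^{d_{G}}=\mathcal{G}_{oc}^{d_{G}}$, and applying it to the band $\mathcal{C}=\mathcal{G}_{oc}^{d_{G}}$ gives $(\mathcal{G}_{oc}^{d_{G}}\cap H)^{d_{G}}=\mathcal{G}_{oc}$. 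Combining these with the coincidence of disjointness yields $(\mathcal{B}^{d_{H}})^{d_{H}}=\mathcal{G}_{oc}\cap H=\mathcal{B}$, so $\mathcal{B}$ is a band of $H$, and transporting back through Lemma~\ref{lemma00000} completes all three assertions.

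The main obstacle is the band statement, and within it the identity $\mathcal{B}=\mathcal{G}_{oc}\cap H$. Order convergence is not preserved by arbitrary order-dense embeddings, so the equivalence of order continuity into $i(F)$ and into $F^{\delta}$ has to be teased out of the interplay of order density and pervasiveness of $i(F)$ in $F^{\delta}$; and it is precisely pervasiveness — not the mere order density of $H$ in $R$, which by itself cannot reach the order-complete space $G$ — that lets Ogasawara's band in $G$ be pushed down to $H$.
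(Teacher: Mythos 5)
Your proof is correct, and while its skeleton matches the paper's (reduce to $i(F)=F$ via Lemma~\ref{lemma00000}, form the generated Riesz subspace, prove majorizing from directedness, get order density from pervasiveness, then harvest the Riesz--Kantorovich formula and Ogasawara's theorem from the order complete space $\mathcal{L}^r(E,F^{\delta})$), the execution differs in a genuine way. Where the paper cites ready-made results, you prove everything with one pervasiveness device run inside the order complete space $G=\mathcal{L}^r(E,F^{\delta})$: your sup-plus-$S_0$ contradiction replaces \cite[Proposition~2.8.5]{GK:18} for order density, and the same trick replaces \cite[Proposition~1.6.2]{GK:18} by showing directly that $\sup_H\{T,0\}=\sup_G\{T,0\}$. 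The band statement is where the routes really diverge: the paper first proves that $\mathcal{S}\cap\mathcal{L}^{oc}(E,F^{\delta})$ is a band of the Riesz completion $\mathcal{S}$ (by a pervasiveness contradiction) and then descends to $\mathcal{L}^r(E,F)$ via the restriction result \cite[Corollary~4.2.7]{GK:18}, whereas you bypass $\mathcal{S}$ almost entirely and compute double disjoint complements relative to $G$, using the identity $(\mathcal{C}\cap H)^{d_G}=\mathcal{C}^{d_G}$ for bands $\mathcal{C}$ of $G$ (applied to $\mathcal{G}_{oc}$ and to $\mathcal{G}_{oc}^{d_G}$) together with the coincidence of disjointness in $H$, $R$ and $G$; this is cleaner and additionally identifies $\mathcal{B}^{d_H}$ explicitly as $\mathcal{G}_{oc}^{d_G}\cap H$. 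Finally, your hand-proof of the transfer of order convergence between $i(F)$ and $F^{\delta}$ (regularity of the order dense sublattice in one direction, replacement of a dominating net by its net of $i(F)$-majorants in the other) is exactly the content of the paper's citation of \cite[Corollary~2.9]{GTX:17}, so you have made the whole argument self-contained at the cost of some length; the paper's version is shorter but leans on the machinery of \cite{GK:18} and \cite{GTX:17}.
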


\begin{proof}
In light of Lemma~\ref{lemma00000}, we may assume, without loss of generality, that $i(F)=F$. Let $\mathcal{S}$ be the Riesz subspace of $\mathcal{L}^r(E,F^{\delta})$ generated by $\mathcal{L}^r(E,F)$. We will show next that $\mathcal{L}^r(E,F)$ is majorizing in $\mathcal{S}$. Indeed, let $T \in \mathcal{S}$, then $$T=\bigvee_{i=1}^k V_i -\bigvee_{j=1}^m W_j,$$ where $V_i,W_j \in \mathcal{L}^r(E,F)$ for all $i=1,\cdots, k$ and $j=1,\cdots,m$. Since $\mathcal{L}^r(E,F)$ is directed we may pick $\widetilde{V}_i, \widetilde{W}_j \in \mathcal{L}^r(E,F)$ such that $\widetilde{V}_i \geq V_i, 0$ for each $i=1,\cdots,k$ and $\widetilde{W}_j \geq -W_j, 0$ for each $j=1,\cdots,m$. Then we note that

$$\widetilde{V}_1+\cdots+\widetilde{V}_k\geq \widetilde{V}_i \geq V_i \text{ and } W_j \geq -\widetilde{W}_j  \geq -(\widetilde{W}_1+\cdots +\widetilde{W}_m) ,  $$

for all $i=1,\cdots,m$ and $j=1,\cdots,k$. Thus we get 

$$\sum_{i=1}^k \widetilde{V}_i \geq \bigvee_{i=1}^k V_i \text{ and } \bigvee_{j=1}^m W_j \geq -\sum_{j=1}^m \widetilde{W}_j. $$

In particular, it follows that 
$$\sum_{i=1}^k \widetilde{V}_i+\sum_{j=1}^m \widetilde{W}_j \geq \bigvee_{i=1}^k V_i-\bigvee_{j=1}^m W_j =T.  $$

Therefore $\mathcal{L}^r(E,F)$ is both majorizing and pervasive in the Archimedean Riesz space $\mathcal{S}$. By applying \cite[Proposition~2.8.5]{GK:18} we get that $\mathcal{L}^r(E,F)$ is order dense in $\mathcal{S}$. Thus the Riesz completion $\mathcal{L}^r(E,F)^\rho$ of $\mathcal{L}^r(E,F)$ can be identified with $\mathcal{S}$.  

Let $T \in \mathcal{L}^r(E,F)$ such that $T^+$ exists. Then by applying~\cite[Proposition~1.6.2]{GK:18} and the Riesz-Kantorovich formula calculated in $\mathcal{L}^r(E,F^{\delta})$ we get  $T^+(x)=\sup\{T(y) \,\, | \,\, y\in [0,x]\} $ for all $x \in E_+$.  Thus $\mathcal{L}^r(E,F)$ has the Riesz-Kantorovich property.

Next we will show that $\mathcal{L}^{oc}(E,F)\cap \mathcal{L}^r(E,F)$ is a band of $\mathcal{L}^r(E,F)$.  By the classical  Ogasawara theorem we have that $\mathcal{L}^{oc}(E,F^{\delta}) $ is a band of $\mathcal{L}^r(E,F^\delta)$. We claim that $\mathcal{B}=\mathcal{S} \cap \mathcal{L}^{oc}(E,F^{\delta})$ is a band of $\mathcal{S}$. We note first that  since $\mathcal{S}$ is a Riesz subspace of $\mathcal{L}^r(E,F^{\delta})$ we have $\mathcal{L}^{oc}(E,F^{\delta})^{d}_{\mathcal{X}}\cap \mathcal{S} \subseteq \mathcal{B}^{d}_{\mathcal{S}}$, where  $\mathcal{X}=\mathcal{L}^r(E,F^\delta)$. Let $S \in \mathcal{S}$ such that $S \in (\mathcal{B}^{d}_{\mathcal{S}})_{\mathcal{S}}^d$ and $S \not\in \mathcal{B}$. Then $S \not\in \mathcal{L}^{oc}(E,F^{\delta})=(\mathcal{L}^{oc}(E,F^{\delta})^{d}_{\mathcal{X}})_{\mathcal{X}}^d$. Thus there exists $T\in  \mathcal{L}^{oc}(E,F^{\delta})^d_{\mathcal{X}}$ such that $T \not\perp_{\mathcal{X}} S$. Then $|T|\wedge |S|>0$ and by pervasiveness of $\mathcal{L}^r(E,F)$ in $\mathcal{L}^r(E,F^{\delta})$ we can find $S_0 \in \mathcal{L}^r(E,F) \subseteq \mathcal{S}$ such that

$$0<S_0 \leq |T| \wedge |S| \leq |T|,|S|.$$

Since $T \in \mathcal{L}^{oc}(E,F^{\delta})^d_{\mathcal{X}}$ and $S_0=|S_0| \leq |T|$ we have $S_0 \in \mathcal{L}^{oc}(E,F^{\delta})^d_{\mathcal{X}}$ and thus $S_0 \in \mathcal{B}^{d}_{\mathcal{S}}$, which is a contradiction as $|S| \wedge |S_0|=S_0>0$ and $S \in (\mathcal{B}^{d}_{\mathcal{S}})_{\mathcal{S}}^d$. Thus $\mathcal{B}$ is a band of $\mathcal{S}$. Now by applying  \cite[Corollary~4.2.7]{GK:18} we get that $\mathcal{B} \cap \mathcal{L}^r(E,F) $ is a band of $\mathcal{L}^r(E,F)$.  Finally we note that $\mathcal{B} \cap \mathcal{L}^r(E,F)= \mathcal{L}^{oc}(E,F) \cap \mathcal{L}^r(E,F)$. Indeed, let $T \in \mathcal{L}^r(E,F)$ and $(x_a) \subseteq E$  such that $x_{\alpha} \xrightarrow{{o}} 0$, then by \cite[Corollary~2.9]{GTX:17} we have  $T(x_{\alpha}) \xrightarrow{{o}} 0$ in $F^{\delta}$ if and only if  $T(x_{\alpha}) \xrightarrow{{o}} 0$ in $F$. 
\end{proof}

The above result prompt us to identify conditions on $E$ and $F$ under which $\mathcal{L}^r(E,F)$ is pervasive in $\mathcal{L}^r(E,F^{\delta})$. Let $f \in E^{\sim}, v \in F$, then we denote by $f \otimes v$ the rank one operator given by the formula $(f \otimes v)(x)=f(x) v$ for all $x \in E$, and we denote by $\mathcal{RO}(E,F)=\{f \otimes v \,\, | \,\, f\in E^\sim,v\in F\}$ the set of all order bounded rank one operators.

\begin{lemma}\label{lemmanew0}
If $\mathcal{RO}(E,F^{\delta})$ is pervasive in $\mathcal{L}^r(E,F^{\delta})$, then $\mathcal{L}^r(E,F)$ is pervasive in $\mathcal{L}^r(E,F^{\delta})$.
\end{lemma}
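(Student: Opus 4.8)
The plan is to reduce an arbitrary positive regular operator into $F^{\delta}$ first to a positive rank one operator, using the hypothesis, and then to a positive rank one operator whose value vector already lies in $i(F)$, exploiting that $i(F)$ is itself pervasive in $F^{\delta}$. Unwinding the convention, what I must show is that $\mathcal{L}^r(E,i(F))$ is pervasive in $\mathcal{L}^r(E,F^{\delta})$: given $T \in \mathcal{L}^r(E,F^{\delta})$ with $T>0$, I need to produce $S \in \mathcal{L}^r(E,i(F))$ with $0<S\le T$.

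First I would apply the hypothesis that $\mathcal{RO}(E,F^{\delta})$ is pervasive in $\mathcal{L}^r(E,F^{\delta})$ to the given $T>0$, obtaining a rank one operator $f\otimes v$, with $f\in E^{\sim}$ and $v\in F^{\delta}$, such that $0<f\otimes v\le T$. I would then normalize the pair $(f,v)$. Since $f\otimes v\neq 0$, both $f$ and $v$ are nonzero, and the requirement $(f\otimes v)(x)=f(x)v\ge 0$ for all $x\in E_+$ forces $v$ to be comparable to $0$ and $f$ to have constant sign on $E_+$ (if $f(x_0)>0$ for some $x_0\in E_+$ then $f(x_0)v\ge 0$ forces $v\ge 0$, and a sign change of $f$ on $E_+$ would then force $v=0$). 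Replacing $(f,v)$ by $(-f,-v)$ if necessary, I may therefore assume $f\in (E^{\sim})_+$ with $f\neq 0$ and $v\in (F^{\delta})_+$ with $v>0$.

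Next comes the key step. Because $F^{\delta}$ is an order completion of $F$, the image $i(F)$ is (by definition) pervasive in $F^{\delta}$. Applying this to $v>0$ yields $w\in i(F)$ with $0<w\le v$. I would then set $S=f\otimes w$. Since $w\in i(F)$, the operator $S$ maps $E$ into $i(F)$, and since $f\ge 0$ and $w\ge 0$ it is a positive operator into $i(F)$; hence $S\in \mathcal{L}^r(E,i(F))$, and under the bipositive inclusion $\mathcal{L}^r(E,i(F))\subseteq \mathcal{L}^r(E,F^{\delta})$ its positivity is preserved. Evaluating on $E_+$ gives $S>0$ (as $f\neq 0$ and $w>0$), while $S\le f\otimes v$ follows from $f(x)(v-w)\ge 0$ for all $x\in E_+$. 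Consequently $0<S\le f\otimes v\le T$, which exhibits the required operator and finishes the argument.

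I expect the only genuinely delicate points to be the normalization step and the verification that $S=f\otimes w$ really sits in $\mathcal{L}^r(E,i(F))$ as a \emph{strictly} positive element dominated by $T$ \emph{inside} $\mathcal{L}^r(E,F^{\delta})$. Once these order relations are checked pointwise on $E_+$, the entire reduction rests on the pervasiveness of $i(F)$ in $F^{\delta}$, which is built into the definition of the order completion.
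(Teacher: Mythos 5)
Your proof is correct and takes essentially the same route as the paper: extract a rank one operator $0 < f\otimes v \le T$ from the hypothesis, normalize so that both $f$ and $v$ are positive, then invoke the pervasiveness of $i(F)$ in $F^{\delta}$ to replace $v$ by some $w \in i(F)$ with $0 < w \le v$, yielding $0 < f\otimes w \le T$ with $f\otimes w \in \mathcal{L}^r(E,i(F))$. The only cosmetic difference is in the normalization: the paper rescales, defining $\widetilde{f}=f/f(x_0)$ and taking $S(x_0)=f(x_0)v$ as the positive vector, whereas you flip the signs of the pair $(f,v)$; both devices are valid and rest on the same observation that $f(x)v \ge 0$ with $v>0$ forces $f(x)\ge 0$.
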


\begin{proof}
Let  $T \in \mathcal{L}^r(E,F^{\delta})$ with $T>0$. Then we may find $f \in E^\sim\setminus\{0\}, v \in F^{\delta}\setminus\{0\}$ such that $0<S=f \otimes v\leq T$. Let $x_0 \in E_+$ such that $S(x_0)=f(x_0) v \in F_+^{\delta} \setminus\{0\}$. Define $\widetilde{f} \in E^{\sim}$ by $\widetilde{f}(x)=f(x)/f(x_0)$ for all $x \in E$ and note that $$S=f \otimes v=\widetilde{f} \otimes S(x_0).$$ Since $S>0$ and $S(x_0)\in F_+^{\delta}\setminus\{0\}$, we get $\widetilde{f}>0$.  Now  we choose $\widetilde{v} \in i(F)$ such that $0< \widetilde{v} \leq S(x_0)$ and put $\widetilde{S}=\widetilde{f} \otimes \tilde{v},$    then clearly $\widetilde{S} \in \mathcal{L}^r(E,i(F))$ and $0 <\widetilde{S}\leq S \leq T.$  It now follows from the above that  $\mathcal{L}^r(E,F)$ is pervasive in $\mathcal{L}^r(E,F^{\delta})$.
\end{proof}

In the following Proposition, we provide a sufficient condition on the range space \( F \) under which \( \mathcal{L}^r(E,F) \) is pervasive in \( \mathcal{L}^r(E,F^{\delta}) \). To this end, we recall that the coordinate functional \( \lambda_{e_i} \), corresponding to an atom \( e_i \) as defined via the band projection associated with \( [e_i] \), is both positive and order-continuous.

\begin{proposition}\label{mainprop1}
If $F$ is atomic, then $\mathcal{L}^r(E,F)$ is pervasive in $\mathcal{L}^r(E,F^{\delta})$.
\end{proposition}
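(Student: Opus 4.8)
The plan is to invoke Lemma~\ref{lemmanew0}, reducing the claim to showing that $\mathcal{RO}(E,F^{\delta})$ is pervasive in $\mathcal{L}^r(E,F^{\delta})$. The bridge between the atomic structure of $F$ and rank-one operators into $F^{\delta}$ is the fact that $F^{\delta}$ is itself atomic; concretely, if $\{e_\iota \mid \iota \in I\}$ is a complete disjoint system of atoms of $F$, I claim that $\{i(e_\iota)\mid \iota \in I\}$ is a complete disjoint system of atoms of $F^{\delta}$. Granting this, every $u\in F^{\delta}_+$ admits the expansion $u=\bigvee_{\iota} \lambda_{i(e_\iota)}(u)\, i(e_\iota)$, with the coordinate functionals $\lambda_{i(e_\iota)}$ and band projections $P_{i(e_\iota)}$ now living on $F^{\delta}$.

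First I would verify that $i(e)$ is an atom of $F^{\delta}$ for each atom $e$ of $F$. Recall that $i$ is a Riesz embedding (so $i(F)$ is a Riesz subspace) and that $i(F)$ is order dense in $F^{\delta}$, being majorizing and pervasive in the Archimedean space $F^{\delta}$ by \cite[Proposition~2.8.5]{GK:18}. Given $x\in F^{\delta}$ with $0\le x\le i(e)$, order density gives $x=\sup\{\,i(z)\mid z\in F_+,\ i(z)\le x\,\}$; for each such $z$, bipositivity yields $0\le z\le e$, and since $e$ is an atom we have $z=\lambda_z e$ with $\lambda_z\in[0,1]$, so $i(z)=\lambda_z\, i(e)$. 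Taking the supremum shows $x=\big(\sup_z\lambda_z\big)\,i(e)\in\mathbb{R}\, i(e)$, whence $i(e)$ is an atom. Disjointness of distinct $i(e_\iota)$ is immediate because $i$ preserves $\wedge$, and completeness follows from pervasiveness: if $x\in F^{\delta}_+$ satisfies $x\wedge i(e_\iota)=0$ for all $\iota$ yet $x>0$, then some $0<i(w)\le x$ with $w\in F_+$, and completeness of $\{e_\iota\}$ in $F$ forces $w\wedge e_{\iota_0}>0$ for some $\iota_0$, giving the contradiction $0<i(w\wedge e_{\iota_0})=i(w)\wedge i(e_{\iota_0})\le x\wedge i(e_{\iota_0})=0$. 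I expect this passage — transporting the atomic structure across the order completion, in particular that images of atoms remain atoms — to be the main obstacle.

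Finally, I would establish pervasiveness of $\mathcal{RO}(E,F^{\delta})$. Let $T\in\mathcal{L}^r(E,F^{\delta})$ with $T>0$, and pick $x_0\in E_+$ with $u:=T(x_0)>0$. By completeness of $\{i(e_\iota)\}$ there is an atom $q:=i(e_{\iota_0})$ with $\lambda_q(u)>0$. Set $g:=\lambda_q\circ T$; since $T$ is order bounded and $\lambda_q$ is a positive functional, $g\in E^{\sim}$, and because $P_q(v)=\lambda_q(v)\,q$ for all $v\in F^{\delta}$ we obtain $P_q\circ T=g\otimes q\in\mathcal{RO}(E,F^{\delta})$. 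As $P_q$ is a band projection, $0\le P_q(Ty)\le Ty$ for every $y\in E_+$, so $0\le g\otimes q\le T$, while $(g\otimes q)(x_0)=\lambda_q(u)\,q>0$ shows $g\otimes q>0$. Thus $0<g\otimes q\le T$ witnesses pervasiveness of $\mathcal{RO}(E,F^{\delta})$, and Lemma~\ref{lemmanew0} completes the proof.
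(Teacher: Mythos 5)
Your proposal is correct and follows essentially the same route as the paper's proof: reduce via Lemma~\ref{lemmanew0} to pervasiveness of $\mathcal{RO}(E,F^{\delta})$, show that a complete disjoint system of atoms of $F$ remains one in $F^{\delta}$ (atoms via order density and the Archimedean property, completeness via pervasiveness of $i(F)$), and then dominate $T>0$ from below by the rank-one operator $(\lambda_{e_j}\circ T)\otimes e_j$. The only cosmetic difference is that you justify $(\lambda_{e_j}\circ T)\otimes e_j\le T$ by the band-projection inequality $P_q\circ T\le T$, whereas the paper uses the atomic expansion $T(x)=\bigvee_{i}\lambda_{e_i}(T(x))e_i$; these are interchangeable.
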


\begin{proof}
Without loss of generality, we assume that $i(F)=F$. Let $\{e_{i} \,\, | \,\, i \in I\}$ be a complete disjoint system of atoms in $F$, then it is easy to see that  $\{e_{i} \,\, | \,\, i \in I\}$ is also a complete disjoint system of atoms in $F^{\delta}$.  Indeed, let $y_0 \in F^{\delta}$ such that $0 \leq y_0 \leq e_i$. Since $F$ is order dense in $F^{\delta}$ we get $y_0=\sup\{y \in F \mid 0\leq y\leq y_0\}$. Now for any $y \in F$ such that $0\leq y\leq y_0 \leq e_i$ we have  $y=\lambda_{y}e_i$ for some $\lambda_{y} \in [0,1]$. Put $k$ to be the supremum of all such $\lambda_y$. Then, since $F^{\delta}$ is also Archimedean we get $y_0=\sup\{y \in F \mid 0\leq y\leq y_0\}=ke_i$. Hence $e_i$ is an atom of $F^{\delta}$.  Now let $y_0 \in F^{\delta}$ such that $|y_0| \wedge e_i=0$ for all $i$ and suppose that $y_0 \neq 0$. Since $F$ is pervasive in $F^{\delta}$ we may find $y \in F$ such that $0<y\leq |y_0|$, then $y \wedge e_i=0$ for all $i$, thus $y=0$, which is a contradiction. Thus  $\{e_{i} \,\, | \,\, i \in I\}$ is  a complete disjoint system of atoms in $F^{\delta}$. Let $\lambda_{e_i}$ be the coordinate functional of $e_i$ and  $T \in \mathcal{L}^r(E,F^{\delta})$ with $T>0$. Then there exists $x_0 \in E_+$ with $T(x_0)>0$, thus we can find $j \in I$ with $\lambda_{e_{j}}(T(x_0))>0$. Put $\widetilde{T}=(\lambda_{e_{j}} \circ T) \otimes e_{j}$, then we clearly have that $\widetilde{T} \in \mathcal{RO}(E,F^{\delta})$ and $\widetilde{T}>0$. Moreover, for any $x \in E_+$ it follows that 

$$\widetilde{T}(x)=\lambda_{e_{j}}(T(x))e_{j} \leq \bigvee_{i \in I} \lambda_{e_i}(T(x)) e_i=T(x),$$

Therefore  $\mathcal{RO}(E,F^{\delta})$ is pervasive in $\mathcal{L}^r(E,F^{\delta})$ and the result follow by Lemma~\ref{lemmanew0}.
\end{proof}

We now turn our focus to the domain space $E$. The results in \cite{W:24} motivated us to study the case where $E$ is atomic. As demonstrated in the following result, the band projection associated with $\mathcal{L}^{oc}(E,F^{\delta})$ takes a specific form in this setting, allowing for a straightforward verification of whether an order-bounded operator $T:E \rightarrow F$ is order continuous.

\begin{lemma}\label{lemmanew}
Let $F$ be an order complete Archimedean Riesz space. Let $\{e_{i} \mid i \in I\}$ be a complete disjoint system of atoms in $E$, and let $\lambda_{e_i}$ denote the coordinate functional of $e_i$. Let $T \in \mathcal{L}^r(E,F)$, $P$ be the band projection associated with $\mathcal{L}^{oc}(E,F)$, and define $T_{\alpha} = \sum_{i \in \alpha} \lambda_{e_i} \otimes T(e_i)$ for all $\alpha \in \mathrm{Fin}(I)$. Then the following hold:
\begin{enumerate}
    \item[(i)] $T^\pm(e_i) = T(e_i)^\pm$ for each $i \in I$.
    \item[(ii)] $T_\alpha \xrightarrow{o} P(T)$, and in particular, $T_\alpha(x) \xrightarrow{o} P(T)(x)$ for all $x \in E$. Furthermore, if $T \geq 0$, the convergence $\xrightarrow{o}$ can be replaced by $\uparrow$ in the preceding limits.
\end{enumerate}
\end{lemma}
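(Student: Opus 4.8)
The plan is to establish (i) directly from the atomic structure of $E$ together with the Riesz--Kantorovich formula (available because $F$ is order complete), and then to bootstrap (ii) from the positive case using (i). Since $F$ is order complete, $T^+$ exists in $\mathcal{L}^r(E,F)$ and satisfies $T^+(e_i)=\sup\{T(y)\mid y\in[0,e_i]\}$. As $e_i$ is an atom, $[0,e_i]=\{\lambda e_i\mid\lambda\in[0,1]\}$, so $T^+(e_i)=\sup\{\lambda T(e_i)\mid\lambda\in[0,1]\}=T(e_i)\vee 0=T(e_i)^+$; applying the same to $-T$ gives $T^-(e_i)=T(e_i)^-$, proving (i).

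For (ii) I would first treat $T\geq 0$. Each $\lambda_{e_i}$ is positive and order continuous, so each rank-one operator $\lambda_{e_i}\otimes T(e_i)$, and hence each finite sum $T_\alpha$, lies in $\mathcal{L}^{oc}(E,F)$. Writing $P_\alpha=\sum_{i\in\alpha}P_{e_i}$ for the band projection onto the band generated by $\{e_i\mid i\in\alpha\}$, one has $T_\alpha(x)=T(P_\alpha x)$ for $x\in E_+$; since $0\leq P_\alpha x\leq x$ and $T\geq 0$, this yields $0\leq T_\alpha\leq T$ and shows the net $(T_\alpha)$ is increasing. Because $\mathcal{L}^r(E,F)$ is order complete, $S:=\sup_\alpha T_\alpha$ exists and $T_\alpha\uparrow S$, and since $\mathcal{L}^{oc}(E,F)$ is a band (hence order closed) we get $S\in\mathcal{L}^{oc}(E,F)$. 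Evaluating at an atom and using $\lambda_{e_j}(e_i)=\delta_{ij}$ gives $T_\alpha(e_i)=T(e_i)$ whenever $i\in\alpha$, so $S(e_i)=T(e_i)$ and therefore $R:=T-S\geq 0$ vanishes on every $e_i$.

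The crux is to identify $S$ with $P(T)$, for which I would show $R\in\mathcal{L}^{oc}(E,F)^d$. If $0\leq U\leq R$ with $U$ order continuous, then $0\leq U(e_i)\leq R(e_i)=0$; for $x\in E_+$ the partial sums $x_\alpha=\sum_{i\in\alpha}\lambda_{e_i}(x)e_i$ increase to $x$, and order continuity of $U$ forces $U(x)=\lim_\alpha U(x_\alpha)=0$, so $U=0$. Applying this to $U=R\wedge|V|$, which is order continuous (being dominated by $|V|$ in the ideal $\mathcal{L}^{oc}(E,F)$) and dominated by $R$, yields $R\perp V$ for every $V\in\mathcal{L}^{oc}(E,F)$, i.e. $R\in\mathcal{L}^{oc}(E,F)^d$. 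By uniqueness of the band decomposition $T=S+R$ with $S\in\mathcal{L}^{oc}(E,F)$ and $R\in\mathcal{L}^{oc}(E,F)^d$, we conclude $P(T)=S$, hence $T_\alpha\uparrow P(T)$.

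For general $T$ I would use (i) to write $T_\alpha=(T^+)_\alpha-(T^-)_\alpha$, apply the positive case to $T^+$ and $T^-$ to obtain $(T^\pm)_\alpha\uparrow P(T^\pm)$, and then invoke linearity of $P$ together with linearity of the order convergence structure to get $T_\alpha\xrightarrow{o}P(T^+)-P(T^-)=P(T)$. The pointwise claim $T_\alpha(x)\xrightarrow{o}P(T)(x)$ follows because, in the order complete space $\mathcal{L}^r(E,F)$, a decreasing net $b_\gamma\downarrow 0$ dominating $|T_\alpha-P(T)|$ satisfies $b_\gamma(|x|)\downarrow 0$, so operator order convergence implies pointwise order convergence. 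I expect the disjointness step—verifying that the remainder $R$ annihilating the atoms is genuinely singular—to be the main obstacle, the remaining steps being routine.
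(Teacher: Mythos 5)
Your proof is correct, and most of it coincides with the paper's own argument: part (i) is identical (Riesz--Kantorovich at an atom), the reduction of the general case to $T\geq 0$ via $T_\alpha=(T^+)_\alpha-(T^-)_\alpha$ and part (i) is identical, and the final pointwise step (domination $|T_\alpha(x)-P(T)(x)|\leq|T_\alpha-P(T)|(|x|)$ plus pointwise computation of suprema/infima of monotone operator nets) is the same. The one genuinely different step is the crux you identified: showing $\sup_\alpha T_\alpha=P(T)$ in the positive case. The paper invokes the formula $P(T)=\sup\{S\in\mathcal{L}^{oc}(E,F)\mid 0\leq S\leq T\}$ from \cite[Theorem~1.43]{AL:06}: since each $T_\alpha$ is order continuous with $T_\alpha\leq T$, this gives $W:=\sup_\alpha T_\alpha\leq P(T)$ immediately, and the reverse inequality follows by applying order continuity of an arbitrary competitor $S\leq T$ to the partial sums $\sum_{i\in\alpha}\lambda_{e_i}(x)e_i\uparrow x$ --- the very computation you perform for your operator $U$. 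You instead argue on the complementary piece: $S\in\mathcal{L}^{oc}(E,F)$ by order closedness of the band, the remainder $R=T-S\geq 0$ kills the atoms, no nonzero positive order continuous operator sits below $R$, hence $R\wedge|V|=0$ for every $V\in\mathcal{L}^{oc}(E,F)$ (using that the band is an ideal), so $R\in\mathcal{L}^{oc}(E,F)^d$, and uniqueness of the projection-band decomposition finishes. Both routes rest on the same two facts --- order continuity of each $T_\alpha$ and the atomic decomposition $\sum_{i\in\alpha}\lambda_{e_i}(x)e_i\uparrow x$ --- but yours trades the cited projection formula for standard band theory (order closedness, solidity, uniqueness of the decomposition), making it self-contained at the cost of the extra disjointness bookkeeping, while the paper's is shorter given the reference. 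One small point you should make explicit: the claim $S(e_i)=T(e_i)$ needs either the pointwise formula for suprema of increasing operator nets (\cite[Theorem~1.67(b)]{AB:03}, which the paper cites for exactly this purpose) or the sandwich observation that $T_{\{i\}}\leq S\leq T$ forces $T(e_i)\leq S(e_i)\leq T(e_i)$; as written, "so $S(e_i)=T(e_i)$" is asserted without justification.
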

\nopagebreak
\begin{proof}
(i) Since $e_i$ is an atom, by applying the Riesz-Kantorovich formula, we get
$$ T^+(e_i) = \sup\{T(x) \mid 0 \leq x \leq e_i\} = \sup\{\lambda T(e_i) \mid 0 \leq \lambda \leq 1\} = \sup\{T(e_i), 0\} = T(e_i)^+. $$

We also have $T^-(e_i) = (-T)^+(e_i) = (-T(e_i))^+ = T(e_i)^-$ for each $i \in I$.

(ii) We assume first that $T \geq 0$. We will show that $T_\alpha \uparrow P(T)$. By \cite[Theorem~1.43]{AL:06}, we have
$$ P(T) = \sup\{S \in \mathcal{L}^{oc}(E,F) \mid 0 \leq S \leq T\}. $$

We note that the net $(T_\alpha)_{\alpha \in \mathrm{Fin}(I)}$ is increasing and $T_\alpha = \sum_{i \in \alpha} \lambda_{e_i} \otimes T(e_i) \in \mathcal{L}^{oc}(E,F)$ for each $\alpha$, as $T_{\alpha}$ is a finite sum of the order continuous operators $\lambda_{e_i} \otimes T(e_i)$. For all $x \in E_+$ and $\alpha \in \mathrm{Fin}(I)$, we have
$$ 0 \leq T_\alpha(x) = \sum_{i \in \alpha} \lambda_{e_i}(x) \otimes T(e_i) = T\left(\sum_{i \in \alpha} \lambda_{e_i}(x) e_i\right) \leq T\left(\bigvee_{i \in I} \lambda_{e_i}(x) e_i\right) = T(x). $$

Therefore, $0 \leq T_\alpha \leq P(T)$ for each $\alpha \in \mathrm{Fin}(I)$. The order completeness of $\mathcal{L}^r(E,F)$ implies that there exists $W \in \mathcal{L}^r(E,F)$ such that $T_\alpha \uparrow W \leq P(T)$. Thus, we have $W(x) = \sup_{\alpha \in \mathrm{Fin}(I)} T_\alpha(x)$ for each $x \in E_+$ (see~\cite[Theorem~1.67(b)]{AB:03}). Let $S \in \mathcal{L}^{oc}(E,F)$ such that $0 \leq S \leq T$. Then for any $\alpha \in \mathrm{Fin}(I)$ and $x \in E_+$, we have 
$$ S\left(\sum_{i \in \alpha} \lambda_{e_i}(x) e_i\right) \leq T_\alpha(x) \leq W(x). $$
Since $x= \bigvee_{i \in I} \lambda_{e_i}(x) e_i=\sup\{ \sum_{i \in \alpha} \lambda_{e_i}(x) e_i \mid \alpha \in \mathrm{Fin}(I)\}$ we have $\sum_{i \in \alpha} \lambda_{e_i}(x) e_i \uparrow x $. By applying the order-continuity of $S$ we get
$$ S(x) =\sup_{\alpha \in \mathrm{Fin}(I)} S\left(\sum_{i \in \alpha} \lambda_{e_i}(x) e_i\right)\leq W(x). $$

Therefore $P(T) \leq W$, and thus $W = P(T)$.

Now let $T \in \mathcal{L}^r(E,F)$ and note that $$T_\alpha=\sum_{i \in \alpha} \lambda_{e_i} \otimes T(e_i) =\sum_{i \in \alpha} \lambda_{e_i}\otimes (T(e_i))^+-\sum_{i \in \alpha} \lambda_{e_i}\otimes (T(e_i))^-.$$ Thus by applying (i) we get that 
$$T_\alpha=\sum_{i \in \alpha} \lambda_{e_i} \otimes T^+(e_i)-\sum_{i \in \alpha} \lambda_{e_i} \otimes T^-(e_i)=T^+_\alpha-T^-_\alpha
\overset{o}{\longrightarrow}P(T^+)-P(T^-)=P(T).$$

Finally, let $x \in E$. By the Riesz-Kantorovich formula, we have
$$ |T_\alpha(x) - P(T)(x)| \leq |T_\alpha - P(T)|(|x|).$$ Since $T_\alpha \xrightarrow{o} P(T)$, by invoking again \cite[Theorem~1.67(b)]{AB:03}, we get $T_\alpha(x) \xrightarrow{o} P(T)(x)$.
\end{proof}

\begin{proposition}\label{mainprop2}
If  $E$ is atomic and the uniform closure of the span of its atoms is equal to $E$ or has codimension $1$, then $\mathcal{L}^r(E,F)$ is pervasive in $\mathcal{L}^r(E,F^{\delta})$.
\end{proposition}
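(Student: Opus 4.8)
The plan is to apply Lemma~\ref{lemmanew0}, so it suffices to prove that $\mathcal{RO}(E,F^{\delta})$ is pervasive in $\mathcal{L}^r(E,F^{\delta})$. I would fix a complete disjoint system of atoms $\{e_i \mid i \in I\}$ of $E$, with coordinate functionals $\lambda_{e_i} \in E^{\sim}_{+}$ (positive and order continuous), and set $E_0 = \overline{[\{e_i \mid i \in I\}]}^{u}$, the uniform closure of the span of the atoms, so that by hypothesis either $E_0 = E$ or $E_0$ has codimension $1$ in $E$. Given $T \in \mathcal{L}^r(E,F^{\delta})$ with $T > 0$, I must exhibit a rank one operator $S$ with $0 < S \le T$, and I would split into two cases according to whether $T$ is nonzero on some atom.

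First I would treat the case in which $T(e_j) > 0$ for some $j \in I$. Here I take $S = \lambda_{e_j} \otimes T(e_j) \in \mathcal{RO}(E,F^{\delta})$, which is strictly positive since $\lambda_{e_j} > 0$ and $T(e_j) > 0$. Using $\lambda_{e_j}(x)e_j = P_{e_j}(x) \le x$ for every $x \in E_+$ together with the positivity of $T$, I obtain $S(x) = \lambda_{e_j}(x)\,T(e_j) = T(\lambda_{e_j}(x)e_j) \le T(x)$, whence $0 < S \le T$.

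The substantive case is $T(e_i) = 0$ for all $i \in I$. Then $T$ vanishes on the span $[\{e_i \mid i \in I\}]$, and since order-bounded operators preserve uniform convergence (\cite[Theorem~5.1]{TT:20}), $T$ vanishes on all of $E_0$. If $E_0 = E$ this forces $T = 0$, contradicting $T > 0$; hence this case can only occur when $E_0$ has codimension $1$. Writing $E = E_0 \oplus \mathbb{R}w$ and letting $g \colon E \to \mathbb{R}$ be the linear functional with $g|_{E_0} = 0$ and $g(w) = 1$, the vanishing of $T$ on $E_0$ yields the factorization $T = g \otimes T(w)$ with $T(w) \ne 0$. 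The crux is then a sign analysis: since $g(x)\,T(w) = T(x) \ge 0$ for all $x \in E_+$ and $T(w) \ne 0$, the functional $g$ cannot take both a strictly positive and a strictly negative value on $E_+$, so after replacing $(g,T(w))$ by $(-g,-T(w))$ if needed I may assume $g \ge 0$; as $g \ne 0$ and $E$ is directed there is $x_+ \in E_+$ with $g(x_+) > 0$, and then $g(x_+)\,T(w) \ge 0$ forces $T(w) \in F^{\delta}_+ \setminus \{0\}$. Finally $g$ is order bounded: $T([-u,u]) = \{t\,T(w) \mid t \in g([-u,u])\}$ is order bounded in $F^{\delta}$ exactly when $g([-u,u])$ is bounded in $\mathbb{R}$, because $T(w) > 0$ and $F^{\delta}$ is Archimedean. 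Thus $g \in E^{\sim}$ and $T = g \otimes T(w) \in \mathcal{RO}(E,F^{\delta})$ is itself strictly positive, so $S = T$ works.

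In both cases $(0,T] \cap \mathcal{RO}(E,F^{\delta}) \ne \emptyset$, so $\mathcal{RO}(E,F^{\delta})$ is pervasive in $\mathcal{L}^r(E,F^{\delta})$, and Lemma~\ref{lemmanew0} then delivers the proposition. I expect the codimension-$1$ case to be the main obstacle: one must extract the factorization $T = g \otimes T(w)$ from the vanishing of $T$ on $E_0$, and then correctly pin down the signs of $g$ and $T(w)$ and the order boundedness of $g$ so that this factorization genuinely lands in $\mathcal{RO}(E,F^{\delta})$.
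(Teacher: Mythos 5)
Your proof is correct and follows the same overall route as the paper: reduce to pervasiveness of $\mathcal{RO}(E,F^{\delta})$ via Lemma~\ref{lemmanew0}, then split according to whether $T$ is nonzero on some atom. Two local differences are worth noting. In the first case, where $T(e_j)>0$, the paper invokes Lemma~\ref{lemmanew}(ii) and the Ogasawara band projection $P$ onto $\mathcal{L}^{oc}(E,F^{\delta})$ to get $\lambda_{e_j}\otimes T(e_j)\leq P(T)\leq T$; you instead observe directly that $\lambda_{e_j}(x)e_j=P_{e_j}(x)\leq x$ for $x\in E_+$, so positivity of $T$ gives $\lambda_{e_j}(x)T(e_j)\leq T(x)$ at once. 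This is a genuine (if small) simplification: it dispenses with the order-completeness machinery behind $P(T)$ for this step. In the second case, where $T$ kills every atom, your argument matches the paper's (vanishing on the uniform closure via preservation of uniform convergence, rank-one factorization in the codimension-$1$ situation, order boundedness of the functional via the Archimedean property of $F^{\delta}$), except that you insert a sign analysis to normalize $g\geq 0$ and $T(w)>0$ before running the Archimedean argument. The paper avoids this by working with $|y|$ and $|f(x_n)|$ directly: from $|T(x_n)|=|f(x_n)|\,|y|\leq w$ and $|f(x_n)|\geq n$ one gets $n|y|\leq w$, hence $y=0$, with no need to fix signs. So your extra step is harmless but dispensable; membership in $\mathcal{RO}(E,F^{\delta})$ only requires $g\in E^{\sim}$, not $g\geq 0$.
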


\begin{proof}
We will show next that  $\mathcal{RO}(E,F^{\delta})$ is pervasive in $\mathcal{L}^r(E,F^{\delta})$. The result will then follow by Lemma~\ref{lemmanew0}.  Let $\{e_{i} \,\, | \,\, i \in I\}$ be a complete disjoint system of atoms in $E$, $\lambda_{e_i}$ be the coordinate functional of $e_i$ and $P$ the band projection associated with $\mathcal{L}^{oc}(E,F^{\delta})$. 

Let $T \in \mathcal{L}^r(E,F^\delta)$ with $T > 0$. We begin by considering  the case where $T(e_j)>0$ for some $j \in I$. Define $\widetilde{T}=\lambda_{e_{j}} \otimes  T(e_{j}) \in  \mathcal{RO}(E,F^{\delta})$. By Lemma~\ref{lemmanew}(ii) we have $$0<\widetilde{T}=\lambda_{e_{j}} \otimes  T(e_{j}) \leq \sup\{\sum_{i \in \alpha} \lambda_{e_{i}} \otimes  T(e_{i}) \mid \alpha \in Fin(I)\}=P(T) \leq T$$. 

Next, suppose that $T(e_i)=0$ for all $i$.  By \cite[Theorem~5.1]{TT:20}, $T$  preserves uniform convergence, thus $T$ is zero on the uniform closure   $\overline{[\{e_i\}]}^{u}$ of the span of $\{e_i \,\, | \,\, i \in I\}$. If 
$E=\overline{[\{e_i\}]}^{u}$ we get that $T=0$,  which leads to a contradiction. If $E=\overline{[\{e_i\}]}^{u} \oplus [x_0]$, then $T$ is a non-zero rank-one operator. In particular there exists $y \in F^{\delta}\setminus\{0\}$ and $f:E \rightarrow \mathbb{R}$ linear such that $T(x)=f(x)y$ for all $x \in E$.  We claim that $f$ is order bounded. Indeed, suppose not, then there exists $x \in E_+$ and $(x_n) \subseteq E$ such that $|x_n|\leq x$ and $|f(x_n)| \geq n$ for all $n$. Since $T>0$ we can find $w \in F^{\delta}_+$ such that $|T(x_n)| \leq w$ for all $n \in \mathbb{N}$. Thus we have that $$n|y| \leq |T(x_n)|=|f(x_n)||y|\leq w,$$
for all $n \in \mathbb{N}$. By the Archimedean property of $F^{\delta}$ we get $y=0$, which is a contradiction,  thus $f \in E^{\sim}$ and $T  \in  \mathcal{RO}(E,F^{\delta})$. In both cases we have found $\widetilde{T} \in \mathcal{RO}(E,F^{\delta})$ such that $0 <\widetilde{T}  \leq T$, establishing the pervasiveness of $\mathcal{RO}(E,F^{\delta})$ in $\mathcal{L}^r(E,F^{\delta})$.
\end{proof}

A well-known case where the space $\mathcal{L}^r(E,F)$ is a Riesz space without $F$ being order complete is when $E,F$ are Banach lattices and $E$ is atomic with an order continuous norm (see \cite[Theorem 2.4]{W:19}). Below we give a variant of this result  under slightly weaker assumptions.

\begin{theorem}\label{mainthm2}
Let $E$ be an atomic space such that the uniform closure of the span of atoms is equal to $E$. If $F$ is uniformly complete, then $\mathcal{L}^r(E,F)$ is a Riesz space.
\end{theorem}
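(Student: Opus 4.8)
The plan is to show that $T^+=\sup\{T,0\}$ exists in $\mathcal{L}^r(E,F)$ for every $T$; once this is done, $\sup\{S,T\}=T+(S-T)^+$ and the corresponding infima exist, so the directed Archimedean partially ordered vector space $\mathcal{L}^r(E,F)$ becomes a Riesz space. By Proposition~\ref{mainprop2} the space $\mathcal{L}^r(E,F)$ is pervasive in $\mathcal{L}^r(E,F^{\delta})$, so Theorem~\ref{mainthm} already grants the Riesz–Kantorovich property; hence the only thing missing is the \emph{existence} of $T^+$ inside $\mathcal{L}^r(E,F)$. As usual I would reduce via Lemma~\ref{lemma00000} to the case $i(F)=F$, so that $F$ is an order dense, majorizing, pervasive Riesz subspace of the order complete space $F^{\delta}$. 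Since $F^{\delta}$ is order complete, $\mathcal{L}^r(E,F^{\delta})$ is an order complete Riesz space and $T^+$ exists there; the whole task is to prove that this $T^+$ takes its values in $F$. Indeed, $T^+$ is then a positive operator into $F$, hence lies in $\mathcal{L}^r(E,F)$, and because $\mathcal{L}^r(E,F)\hookrightarrow\mathcal{L}^r(E,F^{\delta})$ bipositively it is automatically the least upper bound of $\{T,0\}$ in $\mathcal{L}^r(E,F)$.

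Fix a complete disjoint system of atoms $\{e_i\mid i\in I\}$ in $E$ with coordinate functionals $\lambda_{e_i}$. First I would record that $T^+(e_i)=T(e_i)^+\in F$ for each $i$ (Lemma~\ref{lemmanew}(i), using that $F$ is a Riesz subspace of $F^{\delta}$), and that $T^+$ is order continuous. For the latter, apply Lemma~\ref{lemmanew}(ii) to the positive operator $T^+$: the partial sums $(T^+)_\alpha=\sum_{i\in\alpha}\lambda_{e_i}\otimes T^+(e_i)$ satisfy $(T^+)_\alpha\uparrow P(T^+)$. Since $(T^+)_\alpha$ agrees with $T^+$ on every atom, $P(T^+)$ agrees with $T^+$ on every atom, so the positive operator $T^+-P(T^+)$ vanishes on $[\{e_i\}]$; being order bounded it preserves uniform convergence (\cite[Theorem~5.1]{TT:20}) and therefore vanishes on $\overline{[\{e_i\}]}^{u}=E$, giving $P(T^+)=T^+$. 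Consequently, for $x\in E_+$ the partial sums
\[
T^+(x_\alpha)=\sum_{i\in\alpha}\lambda_{e_i}(x)\,T^+(e_i)\in F,\qquad x_\alpha:=\sum_{i\in\alpha}\lambda_{e_i}(x)\,e_i,
\]
increase to $T^+(x)$ in $F^{\delta}$.

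The crux is to upgrade this order supremum in $F^{\delta}$ to a relatively uniform limit that is trapped in $F$. First I would show the truncations converge uniformly, $x_\alpha\xrightarrow{u}x$: choosing $z_n\in[\{e_i\}]$ with $|z_n-x|\le\varepsilon_n u$, $\varepsilon_n\downarrow 0$, $u\in E_+$, and letting $Q_\beta$ denote the band projection onto the band generated by $\{e_i\mid i\in\beta\}$, one has $x_{\beta_n}=Q_{\beta_n}(x)$ and $Q_{\beta_n}(z_n)=z_n$ for $\beta_n=\mathrm{supp}(z_n)$, whence $|x-x_{\beta_n}|=|(I-Q_{\beta_n})(x-z_n)|\le|x-z_n|\le\varepsilon_n u$. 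Passing to the increasing sequence $\gamma_n=\beta_1\cup\cdots\cup\beta_n$, the sequence $T^+(x_{\gamma_n})\in F$ obeys, for $m\ge n$,
\[
0\le T^+(x_{\gamma_m})-T^+(x_{\gamma_n})=T^+(x_{\gamma_m}-x_{\gamma_n})\le T^+\!\bigl(x-x_{\gamma_n}\bigr)\le\varepsilon_n\,T^+(u).
\]
Since $F$ is majorizing in $F^{\delta}$, I pick $w\in F_+$ with $w\ge T^+(u)$; then $(T^+(x_{\gamma_n}))$ is uniformly Cauchy in $F$ with dominating element $w$, so by uniform completeness of $F$ it converges relatively uniformly to some $y\in F$. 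As $x_{\gamma_n}\uparrow x$ and $T^+$ is order continuous, $T^+(x_{\gamma_n})\uparrow T^+(x)$ in $F^{\delta}$, while the same sequence converges uniformly, hence in order, to $y$; comparing suprema gives $T^+(x)=y\in F$. Thus $T^+(E_+)\subseteq F$, and by directedness $T^+(E)\subseteq F$.

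I expect the main obstacle to be precisely this passage from $F^{\delta}$ to $F$: each partial sum lies in $F$, but their order supremum computed in $F^{\delta}$ need not, and the only mechanism that forces it back into $F$ is to realize it as a relatively uniform limit with a dominating element inside $F$. The two supporting points that make uniform completeness of $F$ usable — the relatively uniform convergence of the truncations $x_\alpha\to x$ (obtained through the complementary band projections $I-Q_{\beta_n}$) and the replacement of the a priori $F^{\delta}$-bound $T^+(u)$ by an $F$-bound $w$ (obtained from the majorizing property) — are where the real care is needed.
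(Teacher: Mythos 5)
Your proposal is correct and follows essentially the same route as the paper's proof: reduce to $i(F)=F$ via Lemma~\ref{lemma00000}, show that the positive part $T^+$ computed in the order complete space $\mathcal{L}^r(E,F^{\delta})$ actually maps into $F$ by combining $T^+(e_i)=T(e_i)^+\in F$ (Lemma~\ref{lemmanew}(i)) with uniform approximation of $x$ from the span of the atoms, the majorizing property of $F$ in $F^{\delta}$ to replace the regulator $T^+(u)$ by some $w\in F$, and uniform completeness of $F$. The only substantive difference is that the paper identifies the limit in one line, since $T^+$ is order bounded and hence preserves uniform convergence by \cite[Theorem~5.1]{TT:20}, which renders your detour through order continuity of $T^+$ (the argument that $P(T^+)=T^+$ via Lemma~\ref{lemmanew}(ii)) correct but unnecessary; your band-projection estimate for $x_\alpha\xrightarrow{u}x$ is likewise a valid but avoidable substitute for simply taking a sequence in $[\{e_i\}]$ converging uniformly to $x$, which the hypothesis provides directly.
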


\begin{proof}
By Lemma~\ref{lemma00000} the spaces $\mathcal{L}^r(E,F)$ and $\mathcal{L}^r(E,i(F))$ are order isomorphic. Therefore, without loss of generality, we may assume that $F=i(F)$. We will show next that $\mathcal{L}^r(E,F)$ is a Riesz subspace of $\mathcal{L}^r(E,F^{\delta})$. Let $\{e_{i} \,\, | \,\, i \in I\}$ be a complete disjoint system of atoms in $E$, and let $T \in \mathcal{L}^r(E,F)$ and $x \in E$. Then there exists a sequence $(x_{n})$ in the span of $\{e_{i} \,\, | \,\, i \in I\}$ that converges uniformly to $x$. We can express each $x_{n}$ as  $x_{n}=\sum_{i \in \alpha_{n}} \kappa_i(n) e_i$, where $\kappa_i(n) \in \mathbb{R},\alpha_{n} \in \mathrm{Fin}(I)$ for each $n \in \mathbb{N}$ and $i \in \alpha_{n}$. Since $T^+$  preserves uniform convergence, we get $T^+(x_{n}) \xrightarrow{{u}} T^+(x)$. We will show next that $T^+(x)\in F$, and thus $T^+ \in \mathcal{L}^r(E,F)$.  By Lemma~\ref{lemmanew}(i),  we obtain $$T^+(x_{n})=\sum_{i \in \alpha_{n}} \kappa_i(n) T^+(e_i)=\sum_{i \in \alpha_{n}} \kappa_i(n) T(e_i)^+ \in F.$$ As $(T^+(x_{n}))_{n \in \mathbb{N}}$ is   uniformly Cauchy in $F^{\delta}$, there exists $v \in F^{\delta}_+$ such that for all $\epsilon>0$ there exists $n_0$ such that $|T^+(x_{n})-T^+(x_{m})|\leq \epsilon v$ for all $n,m \geq n_0$. Now, since $F$ is majorizing in $F^{\delta}$ we may pick $w \in F$ such that $w \geq v$, then  $|T^+(x_{n})-T^+(x_{m})|\leq \epsilon v \leq \epsilon w$ for all $n,m \geq n_0$. Consequently, $(T^+(x_{n}))_{n \in \mathbb{N}}$ is uniformly Cauchy in $F$. By the uniform completeness of $F$, we can find $y \in F$ such that $T^+(x_{n}) \xrightarrow{{u}} y$ in $F$ and thus in $F^{\delta}$ as well.  Finally by the uniqueness of the uniform limit we get $T^+(x)=y \in F$.
\end{proof}

\section{Solutions to some problems}

In the final part of our paper, we apply our main results to present some solutions to the problems mentioned in the introduction of the paper. Recall that an operator $T \in \mathcal{L}^b(E,F)$ is said to be $\sigma$-order continuous if $x_n \xrightarrow{{o}} 0$ implies that $T(x_n)\xrightarrow{{o}} 0$ for all sequences $(x_n)$ in $E$. We denote by $c$ the space of convergent real sequences and $\ell_0^\infty$ the space of eventually constant real sequences. We denote by $\mathbf{e}_n$ the sequence with all zero terms except for the $n$'th which is $1$. Finally, we denote by $\mathbf{1}$ the constant sequence with all terms equal to $1$.  The following result extends and improves \cite[Theorem~4.7, Theorem~5.3]{W:24}.

\begin{theorem}\label{maincor3}
Let $E$ and $F$ be Archimedean Riesz spaces, where $E$ has a countable complete disjoint system of atoms, and let $T \in \mathcal{L}^b(E,F)$. Then $T$ is order continuous if and only if $T$ is $\sigma$-order continuous. Furthermore, if $E=c$, or $E=\ell^\infty_0$, then $T$ is order continuous if and only if  $\sum_{i=1}^n T(\mathbf{e}_i) \xrightarrow{{o}} T(\mathbf{1}).$
\end{theorem}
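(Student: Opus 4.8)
The plan is to treat the two assertions in turn, using the order completion $F^{\delta}$ as an intermediary so that the classical machinery for order-complete ranges becomes available. Throughout, by \cite[Corollary~2.9]{GTX:17} a net $(T(x_{\alpha}))$ order converges to $0$ in $F$ if and only if it does so in $F^{\delta}$; applying this to sequences as well, $T:E\to F$ is order continuous (respectively $\sigma$-order continuous) if and only if $i\circ T:E\to F^{\delta}$ is. Hence I may replace $F$ by $F^{\delta}$ and assume from the outset that $F$ is order complete, so that $\mathcal{L}^{r}(E,F)=\mathcal{L}^{b}(E,F)$ is an order-complete Riesz space on which the lattice operations are given by the Riesz--Kantorovich formula and $o$- and $o_{1}$-convergence agree on order-bounded nets.

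For the first assertion one implication is immediate, since every sequence is a net. For the converse, suppose $T$ is $\sigma$-order continuous. Because both the order-continuous and the $\sigma$-order-continuous operators form a band (hence an ideal) of $\mathcal{L}^{r}(E,F)$ when $F$ is order complete, it suffices to treat $S:=|T|\ge 0$ and to establish the monotone criterion $x_{\alpha}\downarrow 0\Rightarrow S(x_{\alpha})\downarrow 0$, knowing $x_{n}\downarrow 0\Rightarrow S(x_{n})\downarrow 0$ for sequences. Fix a countable complete disjoint system of atoms $\{e_{i}\mid i\in\mathbb{N}\}$, let $Q_{N}=\sum_{i=1}^{N}P_{e_{i}}$ be the band projection onto $[e_{1},\dots,e_{N}]$, and given $x_{\alpha}\downarrow 0$ fix $\alpha_{0}$ and set $u=x_{\alpha_{0}}$, $u_{N}=Q_{N}(u)=\sum_{i=1}^{N}\lambda_{e_{i}}(u)e_{i}\uparrow u$. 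For $\alpha\ge\alpha_{0}$ the decomposition $x_{\alpha}=Q_{N}(x_{\alpha})+(I-Q_{N})(x_{\alpha})\le Q_{N}(x_{\alpha})+(u-u_{N})$ together with $S\ge 0$ gives $S(x_{\alpha})\le S(Q_{N}(x_{\alpha}))+S(u-u_{N})$.

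The crux, and the step I expect to be the main obstacle, is to combine net-behaviour on the finitely many retained coordinates with sequence-behaviour on the tail. For each fixed $N$, since $\lambda_{e_{i}}$ is positive and order continuous we have $\lambda_{e_{i}}(x_{\alpha})\downarrow 0$ for each $i\le N$, so $S(Q_{N}(x_{\alpha}))=\sum_{i=1}^{N}\lambda_{e_{i}}(x_{\alpha})\,S(e_{i})\le\varepsilon\sum_{i=1}^{N}S(e_{i})$ eventually in $\alpha$, for every $\varepsilon>0$; by the Archimedean property $\inf_{\alpha}S(Q_{N}(x_{\alpha}))=0$, whence $\inf_{\alpha}S(x_{\alpha})\le S(u-u_{N})$. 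Now $(u-u_{N})_{N}$ is a \emph{sequence} decreasing to $0$, so $\sigma$-order continuity of $S$ yields $S(u-u_{N})\downarrow 0$, and therefore $\inf_{\alpha}S(x_{\alpha})=0$. As $(S(x_{\alpha}))$ is decreasing, this means $S(x_{\alpha})\downarrow 0$, as required.

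For the second assertion I would invoke Lemma~\ref{lemmanew} directly, noting that $c$ and $\ell^{\infty}_{0}$ are atomic with the countable complete disjoint system $\{\mathbf{e}_{n}\}$ and satisfy $E=\overline{[\{\mathbf{e}_{n}\}]}^{\,u}\oplus[\mathbf{1}]$ (the uniform closure being $c_{0}$, resp.\ $c_{00}$). Writing $P$ for the band projection onto $\mathcal{L}^{oc}(E,F^{\delta})$ and $T_{n}=\sum_{i=1}^{n}\lambda_{e_{i}}\otimes T(e_{i})$, Lemma~\ref{lemmanew}(ii) gives $T_{n}(x)\xrightarrow{o}P(T)(x)$ for all $x$ (the initial segments being cofinal in $\mathrm{Fin}(\mathbb{N})$); evaluating at $\mathbf{1}$, where $\lambda_{e_{i}}(\mathbf{1})=1$, yields $\sum_{i=1}^{n}T(\mathbf{e}_{i})=T_{n}(\mathbf{1})\xrightarrow{o}P(T)(\mathbf{1})$, so by uniqueness of order limits the stated condition is equivalent to $P(T)(\mathbf{1})=T(\mathbf{1})$. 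Since $T_{n}(e_{j})=T(e_{j})$ for $n\ge j$, the same evaluation shows $P(T)(e_{j})=T(e_{j})$ on every atom, hence $P(T)$ and $T$ agree on $\overline{[\{\mathbf{e}_{n}\}]}^{\,u}$ because both preserve uniform convergence by \cite[Theorem~5.1]{TT:20}. Combined with agreement at $\mathbf{1}$ and the direct-sum decomposition of $E$, this forces $P(T)=T$ precisely when $P(T)(\mathbf{1})=T(\mathbf{1})$; as $T$ is order continuous if and only if $P(T)=T$, the equivalence follows. (The forward direction is also seen at once from $\sum_{i=1}^{n}\mathbf{e}_{i}\uparrow\mathbf{1}$.)
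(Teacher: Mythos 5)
Your proposal is correct, and its two halves relate differently to the paper's proof. For the second assertion ($E=c$ or $E=\ell^\infty_0$) you argue exactly as the paper does: Lemma~\ref{lemmanew}(ii) together with the cofinality of the initial segments $\{1,\dots,n\}$ in $\mathrm{Fin}(\mathbb{N})$ gives $T_n(x)\xrightarrow{o}P(T)(x)$; evaluating at the atoms and at $\mathbf{1}$, and using that order-bounded operators preserve uniform convergence on $E=\overline{[\{\mathbf{e}_n\}]}^{u}\oplus[\mathbf{1}]$, one concludes $T=P(T)$ precisely under the stated condition. For the first assertion, however, your route is genuinely different. The paper derives it from the same convergence statement~(\ref{eq06}): $\sigma$-order continuity applied to $\sum_{i=1}^n\lambda_{e_i}(x)e_i\uparrow x$ gives $T_n(x)\xrightarrow{o}T(x)$, and uniqueness of order limits yields $T(x)=P(T)(x)$ for all $x\in E_+$, hence $T=P(T)$. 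You instead bypass Lemma~\ref{lemmanew} entirely for this half: you reduce to the positive operator $S=|T|$ via Ogasawara's theorem and verify the monotone criterion $x_\alpha\downarrow0\Rightarrow S(x_\alpha)\downarrow0$ by hand, splitting $x_\alpha$ along the band projection onto the first $N$ atoms and dominating the tail by the single decreasing sequence $u-u_N\downarrow0$, which is the only place $\sigma$-order continuity enters; the characterization of order continuity for positive operators (\cite[Theorem~4.4]{HK:21}, cited in the paper) and solidity of the band of order continuous operators then finish. The paper's approach buys economy, since one lemma drives both halves of the theorem (and Proposition~\ref{mainprop2} as well); yours buys a more elementary, self-contained argument that isolates exactly where countability of the atomic system is used.

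One point in your reduction needs care, though it is fixable in two lines. You assert that the $\sigma$-order-continuous operators form a band of $\mathcal{L}^r(E,F^{\delta})$, but the classical Ogasawara theorem (e.g.\ \cite[Theorem~1.73]{AL:06}) concerns operators that are sequentially $o_1$-continuous, whereas the theorem's hypothesis is $\sigma$-order continuity in the paper's sense (sequential $o$-convergence), and the two notions need not coincide on the domain side — a distinction the paper itself is explicit about. The bridge in the direction you need: if $x_n\xrightarrow{o_1}0$ in $E$, then $(x_n)$ is order bounded and $x_n\xrightarrow{o}0$, so $T(x_n)\xrightarrow{o}0$ and is order bounded, hence $T(x_n)\xrightarrow{o_1}0$ in the order complete space $F^{\delta}$. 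Thus $T$ lies in the classical band of $o_1$-$\sigma$-continuous operators, so $|T|$ does too, and since $u-u_N\downarrow0$ this yields $S(u-u_N)\downarrow0$, which is all your argument requires. The analogous identification for net order continuity is \cite[Theorem~9.6]{HK:21}, also cited in the paper. With this adjustment your proof is complete.
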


\begin{proof}
Let $T \in \mathcal{L}^b(E,F)$, and let $F^{\delta}$ be an order completion of $F$ and $i:F \rightarrow F^{\delta}$ the corresponding bipositive map. Then put $\widetilde{T}=iT:E \rightarrow i(F)$, and note that $T$ preserves order convergence of nets if and only if $\widetilde{T}$ preserves order convergence of nets. Thus we may assume without loss of generality that $F=i(F)$. In particular, $T$ can be regarded as an operator taking values in $F^{\delta}$. Let $P$ be the band projection associated with $\mathcal{L}^{oc}(E,F^{\delta})$. We aim to show $T=P(T)$, implying that $T \in \mathcal{L}^{oc}(E,F^{\delta})$. The order continuity of $T$, will then follow by \cite[Corollary~2.9]{GTX:17}. 

Let $\{e_n \mid n \in \mathbb{N}\}$ be a complete disjoint system of atoms in $E$. If $E=c$ or $E=\ell^\infty_0$ we set $\{e_n \mid n \in \mathbb{N}\}$ to be the standard  complete disjoint system of atoms $\{\mathbf{e}_n \mid n \in \mathbb{N}\}$ in $c$ and $\ell^{\infty}_0$. Fix  $x \in E_+$ and define $T_{\alpha}(x) = \sum_{i \in \alpha} \lambda_{e_i}(x) \otimes T(e_i)$ for all $\alpha \in \mathrm{Fin}(\mathbb{N})$, moreover we put $T_n(x)=\sum_{i=1}^n \lambda_{e_i}(x)T(e_i)$ for all $n \in \mathbb{N}$.  Define the function $\phi : \mathbb{N} \to \mathrm{Fin}(\mathbb{N})$ by $\phi(n) = \{1, \dots, n\}$. Note that $n \leq m \Rightarrow \phi(n) \subseteq \phi(m)$, and for all $\alpha \in \mathrm{Fin}(\mathbb{N})$, there exists $n \in \mathbb{N}$ such that $\alpha \subseteq \phi(n)$. Hence, the sequence $(T_n(x))_{n \in \mathbb{N}}$ is a subnet of $(T_\alpha (x))_{\alpha \in \mathrm{Fin}(I)}$. By Lemma~\ref{lemmanew}(ii), $T_{\alpha}(x)\xrightarrow{{o}} P(T)(x)$ in $F^{\delta}$. Therefore, \begin{equation}\label{eq06}T_{n}(x)\xrightarrow{{o}} P(T)(x) \,\,  \text{in} \,\,  F^{\delta}.\end{equation} 

Suppose that $T$ is $\sigma$-order continuous. Since $x=\bigvee_{i=1}^\infty \lambda_{e_i}(x)e_i$ we have $\sum_{i=1}^n \lambda_{e_i}(x)e_i \uparrow x$. The $\sigma$-order continuity of $T$ yields that $T_n(x)=T(\sum_{i=1}^n \lambda_{e_i}(x)e_i) \xrightarrow{{o}} T(x)$ in $F$, and thus in $F^{\delta}$ by \cite[Corollary~2.9]{GTX:17}. Therefore by~(\ref{eq06}) we get $P(T)(x)=T(x)$, and $T=P(T)$ becomes clear.

Now suppose $E=c $  or  $E=\ell^\infty_0$, and $\sum_{i=1}^n T(\mathbf{e}_i) \xrightarrow{{o}} T(\mathbf{1})$, then $\sum_{i=1}^n T(\mathbf{e}_i) \xrightarrow{{o}} T(\mathbf{1})$ in $F^{\delta}$ by \cite[Corollary~2.9]{GTX:17}. In both cases, we have $E=\overline{[\{\mathbf{e}_n\}]}^{u}\oplus[\mathbf{1}]$. Since $T,P(T)$ are order-bounded operators, they  preserve uniform convergence. Hence it is suffices to show that $T$ agrees with $P(T)$ on the set $\{\mathbf{e}_n,\mathbf{1}\,\, | \,\, n \in \mathbb{N}\}$. By Lemma~\ref{lemmanew}(ii) it is immediate that $P(T)(\mathbf{e}_n)=T(\mathbf{e}_n)$ for all $n \in \mathbb{N}$. Finally, observe that $T_n(\mathbf{1})=\sum_{i=1}^n T(\mathbf{e}_i)$, so by ~(\ref{eq06}) we get $P(T)(\mathbf{1})=T(\mathbf{1})$, therefore $T=P(T)$.
\end{proof}

In  \cite[Theorem 3.3]{AW:91} the authors proved that if $E$ is uniformly complete, then  (a) $\mathcal{L}^r(E,\ell_0^\infty)=\mathcal{L}^b(E,\ell_0^\infty)$, (b) $\mathcal{L}^r(E,\ell_0^\infty)$ is a Riesz space, (c) $\mathcal{L}^r(E,\ell_0^\infty)$ has the Riesz-Kantorovich property. The authors then discussed whether the uniformly completeness assumption on $E$ can be removed. We show below that in contrast with statements (a) and (b), statement (c) holds without any assumptions on $E$. In particular by applying  Proposition~\ref{mainprop1} and Theorem~\ref{mainthm} we get the following result.

\begin{theorem}\label{maincor1}
Let $E$ be an Archimedean Riesz space. Then $\mathcal{L}^{r}(E,\ell_0^\infty)$ has the Riesz-Kantorovich property.
\end{theorem}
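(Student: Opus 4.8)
The plan is to derive the statement directly from Proposition~\ref{mainprop1} and Theorem~\ref{mainthm}; indeed, the passage preceding the theorem already signals this route. The only genuinely new ingredient needed is that the range space $\ell_0^\infty$ is atomic. Granting that, Proposition~\ref{mainprop1} applied with $F=\ell_0^\infty$ shows that $\mathcal{L}^r(E,\ell_0^\infty)$ is pervasive in $\mathcal{L}^r(E,(\ell_0^\infty)^\delta)$, and then Theorem~\ref{mainthm} immediately gives that $\mathcal{L}^r(E,\ell_0^\infty)$ has the Riesz-Kantorovich property. No hypothesis on $E$ beyond being an Archimedean Riesz space is used, which is precisely the improvement over \cite[Theorem~3.3]{AW:91} being claimed.

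To see that $\ell_0^\infty$ is atomic, I would take the standard unit vectors $\{\mathbf{e}_n \mid n \in \mathbb{N}\}$ and check that they form a complete disjoint system of atoms. First, each $\mathbf{e}_n$ is an atom: any $x$ with $0 \leq x \leq \mathbf{e}_n$ is supported on the single coordinate $n$ with entry in $[0,1]$, so $x = \lambda \mathbf{e}_n$ for some $\lambda \in [0,1]$. Second, distinct $\mathbf{e}_n, \mathbf{e}_m$ have disjoint supports, so $\mathbf{e}_n \wedge \mathbf{e}_m = 0$ for $n \neq m$. Third, the system is complete: if $x \in \ell_0^\infty$ satisfies $|x| \wedge \mathbf{e}_n = 0$ for every $n$, then each coordinate of $|x|$ vanishes, forcing $x = 0$. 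Hence $\ell_0^\infty$ is atomic, and it is Archimedean as a Riesz subspace of $\ell^\infty$.

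The main point to appreciate is that essentially no obstacle remains at this stage: all the analytic content --- the identification of the Riesz completion, the majorizing argument, and the passage through the Riesz--Kantorovich formula computed in the order complete space $(\ell_0^\infty)^\delta$ --- has already been absorbed into Theorem~\ref{mainthm}, while pervasiveness is supplied by Proposition~\ref{mainprop1}. Consequently the proof reduces to the routine atomicity check above, and I would present it in a few lines by simply chaining the two cited results, noting that the reduction to the bipositive image $i(\ell_0^\infty)$ inside $(\ell_0^\infty)^\delta$ is already handled internally by Lemma~\ref{lemma00000} and needs no separate argument here.
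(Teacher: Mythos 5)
Your proposal is correct and is exactly the paper's argument: the paper derives Theorem~\ref{maincor1} by chaining Proposition~\ref{mainprop1} (with $F=\ell_0^\infty$) and Theorem~\ref{mainthm}, leaving the proof implicit in the preceding paragraph. Your explicit verification that the standard unit vectors form a complete disjoint system of atoms in $\ell_0^\infty$ is the only detail the paper omits, and it is carried out correctly.
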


Let $F$ be an almost Dedekind $\sigma$-complete Riesz space. In \cite[Corollary~4.13, Theorem~4.8]{W:24}, Wickstead proved the following results: (i) $\mathcal{L}^{oc}(\ell_0^{\infty},F)$ is a band of $\mathcal{L}^r(\ell_0^{\infty},F)$, and (ii) $\mathcal{L}^{oc}(\ell_0^{\infty},F)$ is directed. We show below that the technical condition on $F$ can be omitted in the case of statement (i), but cannot be dropped for statement (ii).

\begin{theorem}\label{maincor2}
Let $F$ be an Archimedean Riesz space. Then $\mathcal{L}^{oc}(\ell_0^{\infty},F)$ is a band of $\mathcal{L}^r(\ell_0^{\infty},F)$.
\end{theorem}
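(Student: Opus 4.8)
The plan is to recognize $E=\ell_0^\infty$ as an instance of the hypotheses of Proposition~\ref{mainprop2} and then to upgrade the conclusion of Theorem~\ref{mainthm} by removing the intersection with $\mathcal{L}^r$. First I would record that $\ell_0^\infty$ is atomic with complete disjoint system of atoms $\{\mathbf{e}_n \mid n \in \mathbb{N}\}$, and that the uniform closure $\overline{[\{\mathbf{e}_n\}]}^{u}$ of the span of the atoms is exactly the space of finitely supported sequences: if a sequence of finitely supported vectors converges uniformly, the regulator is either a positive multiple of $\mathbf{1}$ or itself finitely supported, and in both cases the limit is eventually $0$. Since $\ell_0^\infty=\overline{[\{\mathbf{e}_n\}]}^{u}\oplus[\mathbf{1}]$, this uniform closure has codimension $1$, so Proposition~\ref{mainprop2} applies and $\mathcal{L}^r(\ell_0^\infty,F)$ is pervasive in $\mathcal{L}^r(\ell_0^\infty,F^{\delta})$. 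Theorem~\ref{mainthm} then gives that $\mathcal{L}^{oc}(\ell_0^\infty,F)\cap\mathcal{L}^r(\ell_0^\infty,F)$ is a band of $\mathcal{L}^r(\ell_0^\infty,F)$.

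It remains to show $\mathcal{L}^{oc}(\ell_0^\infty,F)\subseteq\mathcal{L}^r(\ell_0^\infty,F)$, so that the intersection above equals $\mathcal{L}^{oc}(\ell_0^\infty,F)$ and the theorem follows. Since an order continuous operator is order bounded, it suffices (and is in fact slightly stronger) to prove that every $T\in\mathcal{L}^b(\ell_0^\infty,F)$ is regular. I would first note the elementary positivity criterion for an operator $A$ on $\ell_0^\infty$ determined by $A(\mathbf{e}_i)=a_i$ and $A(\mathbf{1})=b$: testing against $\mathbf{e}_i\ge 0$ and against $\mathbf{1}-\sum_{i\in J}\mathbf{e}_i\ge 0$ shows that $A\ge 0$ if and only if $a_i\ge 0$ for all $i$ and $b\ge\sum_{i\in J}a_i$ for every finite $J$. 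Writing $u_i=T(\mathbf{e}_i)$ and $w=T(\mathbf{1})$, order boundedness of $T$ on $[0,\mathbf{1}]$ yields $h\in F_+$ with $|T(y)|\le h$ for all $y\in[0,\mathbf{1}]$; computing in $F^{\delta}$ gives $\sum_{i\in J}u_i^{+}=\sup\{T(y)\mid y=\sum_{i\in J}y_i\mathbf{e}_i,\ y_i\in[0,1]\}\le h$ and likewise $\sum_{i\in J}u_i^{-}\le h$ for every finite $J$. I would then fix $b\in F$ with $b\ge h$ and $b\ge w+h$, and define $A$ by $A(\mathbf{e}_i)=u_i^{+}$ and $A(\mathbf{1})=b$. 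The criterion shows $A\ge 0$ and $A-T\ge 0$ (the value of $A-T$ on $\mathbf{e}_i$ is $u_i^{-}$ and on $\mathbf{1}$ is $b-w\ge h\ge\sum_{i\in J}u_i^{-}$), and both operators take values in $F$; hence $T=A-(A-T)$ is regular.

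The main obstacle is precisely this last containment. The naive attempt to write $T=T^+-T^-$ using the positive part computed in the order completion $F^{\delta}$ fails, because $T^+(\mathbf{1})=\sup_n\sum_{i=1}^n u_i^{+}$ need not lie in $F$ when $F$ is not order complete. The point of the argument above is that regularity only requires a positive \emph{majorant}, that is, an upper bound in $F$ of the increasing net $\sum_{i\in J}u_i^{+}$ rather than its supremum, and such a bound is furnished for free by order boundedness of $T$ on $[0,\mathbf{1}]$ together with the special product structure $\ell_0^\infty=\overline{[\{\mathbf{e}_n\}]}^{u}\oplus[\mathbf{1}]$; the positivity criterion on $\ell_0^\infty$ then converts this bound into the required majorant.
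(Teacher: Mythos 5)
Your proposal is correct, and its skeleton is exactly the paper's: verify the hypothesis of Proposition~\ref{mainprop2} for $E=\ell_0^\infty$ (atomic, with $\overline{[\{\mathbf{e}_n\}]}^{u}$ of codimension $1$), invoke Theorem~\ref{mainthm} to get that $\mathcal{L}^{oc}(\ell_0^\infty,F)\cap\mathcal{L}^r(\ell_0^\infty,F)$ is a band, and then remove the intersection by showing $\mathcal{L}^{oc}(\ell_0^\infty,F)\subseteq\mathcal{L}^r(\ell_0^\infty,F)$. The one genuine divergence is in that last step: the paper simply cites two external results, namely that order continuity implies order boundedness and that $\mathcal{L}^b(\ell_0^\infty,F)=\mathcal{L}^r(\ell_0^\infty,F)$ (Abramovich--Wickstead, \cite[Theorem~4.1]{AW:91}), whereas you reprove the second fact from scratch. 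Your argument is sound: the positivity criterion on $\ell_0^\infty$ ($A\geq 0$ iff $A(\mathbf{e}_i)\geq 0$ for all $i$ and $A(\mathbf{1})\geq\sum_{i\in J}A(\mathbf{e}_i)$ for all finite $J$) is correct, and the bound $\sum_{i\in J}u_i^{\pm}\leq h$ follows since $\sum_{i\in J}u_i^{+}=\sup\bigl\{\sum_{i\in J'}u_i \mid J'\subseteq J\bigr\}$ with each $\sum_{i\in J'}u_i=T\bigl(\sum_{i\in J'}\mathbf{e}_i\bigr)\leq h$; note these are finite suprema, so they exist in $F$ itself and no passage to $F^{\delta}$ is even needed there. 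The construction $A(\mathbf{e}_i)=u_i^{+}$, $A(\mathbf{1})=h\vee(w+h)$ then yields $T=A-(A-T)$ with both terms positive and $F$-valued, so $T$ is regular. What this buys is a self-contained proof of Theorem~\ref{maincor2}, and your closing remark correctly identifies why the naive decomposition $T=T^+-T^-$ computed in $F^{\delta}$ fails: one needs a positive majorant in $F$, not the supremum. Two small blemishes, neither fatal: your justification that $\overline{[\{\mathbf{e}_n\}]}^{u}$ equals the finitely supported sequences misdescribes the regulator (an eventually constant positive sequence need be neither a multiple of $\mathbf{1}$ nor finitely supported; but since every regulator in $(\ell_0^\infty)_+$ is dominated by a multiple of $\mathbf{1}$, the conclusion that the limit has eventual value $0$ stands), and the assertion that order continuity implies order boundedness deserves the citation the paper gives it (\cite[Theorem~2.1]{AS:05}), since it is not immediate from the definitions used here.
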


\begin{proof}
 By \cite[Theorem~4.1]{AW:91} we have $\mathcal{L}^b(\ell_0^{\infty},F)=\mathcal{L}^r(\ell_0^{\infty},F)$ and by \cite[Theorem~2.1]{AS:05} we have that every order-continuous operator is order-bounded. Thus 
by Proposition ~\ref{mainprop2} and Theorem~\ref{mainthm} it follows that  $\mathcal{L}^{oc}(\ell_0^{\infty},F)\cap \mathcal{L}^r(\ell_0^{\infty},F)=\mathcal{L}^{oc}(\ell_0^{\infty},F)\cap \mathcal{L}^b(\ell_0^{\infty},F)=\mathcal{L}^{oc}(\ell_0^{\infty},F)$  is a band of $\mathcal{L}^r(\ell_0^{\infty},F)$.
\end{proof}

\begin{example}\label{exmp2}{\emph{$\mathcal{L}^{oc}(\ell_{0}^{\infty},F)$ may not be directed.}} Let $\Gamma$ be an uncountable set equipped with the discrete topology, and let $K=\Gamma \cup \{\infty\}$ be the one-point compactification of $\Gamma$. We put $F=C(K)$, the Banach lattice of continuous real-valued functions on $K$. Fix  a distinct sequence $(\gamma_n)$ in $\Gamma$, and consider the indicator function $\mathbbm{1}_{\gamma_n}$ on the singleton $\{\gamma_n\}$ for each $n \in \mathbb{N}$. Then $\mathbbm{1}_{\gamma_n} \in C(K)$ and we have $\mathbbm{1}_{\gamma_n}  \xrightarrow{{o}} 0$ but $\mathbbm{1}_{\gamma_n}  \not\xrightarrow{{o_1}} 0$. This result is credited to Fremlin (see, for example, \cite[p. 141]{SS:74}  and \cite[Example 1]{ACW:20} for a proof). We define the following operator $T: \ell^{\infty}_0 \rightarrow C(K)$,

\[
T(\mathbf{e}_1) = \mathbbm{1}_{\gamma_1}, \quad 
T(\mathbf{e}_n) = \mathbbm{1}_{\gamma_n} - \mathbbm{1}_{\gamma_{n-1}} \text{ for } n \geq 2, \quad \text{and} \quad 
T(\mathbf{1}) = 0.
\]

We observe that $|T(\mathbf{e}_1)|=\mathbbm{1}_{\gamma_1}$ and $|T(\mathbf{e}_n)|= \mathbbm{1}_{\gamma_n}+\mathbbm{1}_{\gamma_{n-1}}$ for all $n \geq 2$. Consequently, for all $n \in \mathbb{N}$ we have: $\sum_{k=1}^n |T(\mathbf{e}_k)|=2\sum_{k=1}^{n-1} \mathbbm{1}_{\gamma_i}+\mathbbm{1}_{\gamma_n} \leq 2 \mathbbm{1} \in C(K)$, where $ \mathbbm{1}$ denotes the constant one function on $K$. By applying \cite[Proposition~2.3]{W:24} we get $T \in  \mathcal{L}^b(\ell^{\infty}_0,C(K))$, we also note that $\sum_{k=1}^n T(\mathbf{e}_k)=\mathbbm{1}_{\gamma_n} \xrightarrow{{o}} 0=T(\mathbf{1})$, thus by applying Theorem~\ref{maincor3} we get  $T \in \mathcal{L}^{oc}(\ell^{\infty}_0,C(K))$. Now, suppose that $\mathcal{L}^{oc}(\ell_{0}^{\infty},C(K))$ is directed, then there exists $S \in \mathcal{L}^{oc}(\ell^{\infty}_0,C(K))$ such that $0,- T \leq S$. Then for all $n \in \mathbb{N}$ we have the following inequality \begin{equation}\label{eqc(k)}\mathbbm{1}_{\gamma_n}=-T(\textbf{1}-\sum_{k=1}^n \mathbf{e}_k) \leq S(\textbf{1}-\sum_{k=1}^n \mathbf{e}_k)=S(\mathbf{1})-\sum_{k=1}^n S(\mathbf{e}_k)=y_n.\end{equation}
Since $S$ is a positive order-continuous operator and $\sum_{k=1}^n \mathbf{e}_k \uparrow \mathbf{1}$ we get  $y_n \downarrow 0$. Therefore by ~(\ref{eqc(k)}) we obtain $\mathbbm{1}_{\gamma_n} \xrightarrow{{o1}} 0$, which leads to a contradiction. Hence $\mathcal{L}^{oc}(\ell_{0}^{\infty},C(K))$ is not directed.
\end{example}

In \cite[Theorem~3.6]{AW:91}, the authors present an example of a pair of atomic Archimedean Riesz spaces \(E\) and \(F\), and an operator \(T \in \mathcal{L}^b(E, F)\) such that \(T \notin \mathcal{L}^r(E, F)\). In the following example, we observe that \(T\) is order continuous, implying that \(\mathcal{L}^{oc}(E, F)\) is not a subset of \(\mathcal{L}^r(E, F)\). Specifically, this result shows that \(\mathcal{L}^{oc}(E, F)\) is not a band of \(\mathcal{L}^r(E, F)\), and consequently, a generalization of the Ogasawara theorem in this form is not possible in general. However, as indicated by Proposition~\ref{mainprop1} and Theorem~\ref{mainthm}, in this case we do have that \(\mathcal{L}^{oc}(E, F) \cap \mathcal{L}^r(E, F)\) is a band of \(\mathcal{L}^r(E, F)\).

\begin{example}\label{exmp1}{\emph{$\mathcal{L}^{oc}(E,F)$ may not be a  subset of $\mathcal{L}^{r}(E,F)$.}} Let $E_K$ denote the space of all double sequence $(x_{n,m})_{n,m \in \mathbb{N}}$ such that 
 
\begin{itemize}
\item[(i)] There exists $n_0 \in \mathbb{N}$ such that $x_{n,m}=x_{\widetilde{n},\widetilde{m}}$ for all $n,\widetilde{n} \geq n_0$ and $m,\widetilde{m} \in \mathbb{N}$.
\item[(ii)] For all $n \in \mathbb{N}$ we have that $(x_{n,m})_{m \in \mathbb{N}} \in \ell^{\infty}_0$. 
\end{itemize}

We denote by $ \ell^\infty_0(\mathbb{N}\times \mathbb{N})$ the space of double sequences which are constant except on a finite set. Let $T:E_K \rightarrow \ell^\infty_0(\mathbb{N}\times \mathbb{N})$ given by the formula

$$(Tx)_{n,m}=x_{n,2m-1}-x_{n,2m}$$

In \cite[Theorem~3.6]{AW:91} it is proved that $T$ is an order bounded operator  that is not regular. We will show next that $T$ is order continuous. Since the domain space $E_K$ is atomic, we can apply the techniques used in the proof of Theorem~\ref{maincor3} to verify the order continuity of $T$. However, because the range space $\ell^\infty_0(\mathbb{N}\times \mathbb{N})$ is also atomic, we can  provide a more straightforward proof of the order continuity of $T$. Let $(x^{\alpha})_{\alpha \in A}$ be a net in $E_K$ such that $x^{\alpha} \xrightarrow{{o}} 0$ in $E_K$. We aim to show that $T(x^{\alpha}) \xrightarrow{{o}} 0$ in $\ell^\infty_0(\mathbb{N}\times \mathbb{N})$.  By passing to a tail of the net, we can assume that $(x^{\alpha})_{\alpha \in A}$ is order bounded. Indeed, by the definition of order convergence the exists $\alpha_0 \in A$ such that $(x^{\alpha})_{\alpha \geq a_0}$ is order bounded (and order convergent to zero). If we show that the net  $(T(x^{\alpha}))_{\alpha \geq a_0}$ is order convergent to zero then it follows that $T(x^\alpha) \xrightarrow{{o}} 0$. Next, observe that both $E_K$ and $\ell^\infty_0(\mathbb{N}\times \mathbb{N})$ are pervasive and thus regular Riesz subspaces of $\mathbb{R}^{\mathbb{N} \times \mathbb{N}}$, the space of all double sequences. Recall also that $o_1$-convergence is equivalent to order convergence for order bounded nets in the order complete space $\mathbb{R}^{\mathbb{N} \times \mathbb{N}}$. Therefore, by \cite[Corollary~2.12]{GTX:17} and \cite[Lemma~8.17]{AB:06} we have $x^{\alpha}_{n,m} \rightarrow 0$ for all $n,m \in \mathbb{N}$. Then we clearly have $T(x^{\alpha})_{n,m}=x^{\alpha}_{n,2m-1}-x^{\alpha}_{n,2m}\rightarrow 0$ for all $n,m \in \mathbb{N}$. Since $T$ is order-bounded, the net $(T(x^{\alpha}))_{\alpha \in A}$ is order-bounded in $\ell^\infty_0(\mathbb{N}\times \mathbb{N})$ and thus by applying again \cite[Corollary~2.12]{GTX:17} and \cite[Lemma~8.17]{AB:06} we have $T(x^{\alpha}) \xrightarrow{{o}} 0$ in $\ell^\infty_0(\mathbb{N}\times \mathbb{N})$, thereby establishing the order continuity of $T$.

\end{example}
\textbf{Acknowledgements.} The authors would like to thank the participants of the Functional Analysis Seminar at the University of Alberta for their comments on our work during the second-named author's visit to the University. We would also like to express our gratitude to V.~G.~Troitsky for providing valuable feedback on an earlier draft of the paper and for the discussions that led to Example~\ref{exmp1}. Finally, we acknowledge the support of the Natural Sciences and Engineering Research Council of Canada (NSERC).

{\footnotesize

}


\begin{thebibliography}{99}

\bibitem{ACW:20} Abela, K., Chetcuti, E., \& Weber, H. (2020). On different modes of order convergence and some applications. arXiv preprint arXiv:2012.13752.

\bibitem{AS:05} Abramovich, Y., \& Sirotkin, G. (2005). On order convergence of nets. \emph{Positivity}, 9(3), 287--292.

\bibitem{AW:91} Abramovich, Y.~A., \& Wickstead, A.~W. (1991). Regular operators from and into a small Riesz space. \emph{Indagationes Mathematicae}, 2(3), 257--274.

\bibitem{AB:03} Aliprantis, C.~D., \& Burkinshaw, O. (2003). Locally solid Riesz spaces with applications to economics (No. 105). American Mathematical Soc.

\bibitem{AB:06} Aliprantis, C.~D., \& Border, K.~C. (2006). Infinite Dimensional Analysis: A Hitchhiker's Guide. Third edition. Springer.

\bibitem{AL:06} Aliprantis, C.~D., \& Burkinshaw, O. (2006). Positive operators (Vol. 119). Springer Science \& Business Media.

\bibitem{BTW23} O'Brien, M., Troitsky, V.~G., \& van der Walt, J.~H. (2023). Net convergence structures with applications to vector lattices. \emph{Quaestiones Math.}, 46(2).

\bibitem{E:19} Elliot, M. (2019). The Riesz–Kantorovich Formulae. \emph{Positivity}, 23(5), 1245--1259.

\bibitem{GTX:17} Gao, N., Troitsky, V.~G., \& Xanthos, F. (2017). Uo-convergence and its applications to Cesàro means in Banach lattices. \emph{Israel Journal of Mathematics}, 220, 649--689.

\bibitem{HK:21} Hauser, T., \& Kalauch, A. (2021). Order continuity from a topological perspective. \emph{Positivity}, 25(5), 1821--1852.

\bibitem{GK:08} Kalauch, A., \& van Gaans, O. (2008). Bands in pervasive pre-Riesz spaces. \emph{Oper. Matrices}, 2(2), 177--191.

\bibitem{GK:18} Kalauch, A., \& van Gaans, O. (2018). Pre-Riesz Spaces (Vol. 66). Walter de Gruyter GmbH \& Co KG.

\bibitem{SS:74} Schaefer, H.~H., \& Schaefer, H.~H. (1974). Banach lattices and Positive Operators. Springer Berlin Heidelberg.

\bibitem{TT:20} Taylor, M.~A., \& Troitsky, V.~G. (2020). Bibasic sequences in Banach lattices. \emph{Journal of Functional Analysis}, 278(10), 108448.

\bibitem{W:19} Wickstead, A.~W. (2019). When do the regular operators between two Banach lattices form a lattice? Positivity and Noncommutative Analysis: Festschrift in Honour of Ben de Pagter on the Occasion of his 65th Birthday, 591--599.

\bibitem{W:24} Wickstead, A.~W. (2024). Riesz completions of some spaces of regular operators. \emph{Indagationes Mathematicae}, 35(3), 443-458.

\end{thebibliography}
\end{document}